\newtheorem{obs}{Observation}[section]
\newtheorem{thm}[obs]{Theorem}
\newtheorem{cor}[obs]{Corollary}
\newtheorem{prop}[obs]{Proposition}
\newtheorem{lem}[obs]{Theorem}
\newtheorem{deff}[obs]{Definition}
\newtheorem{conj}[obs]{Conjecture}
\newtheorem{preproof}{{\bf Proof.}}
\newenvironment{proof}[1]{\begin{preproof}{\rm
               #1}\hfill{$\rule{2mm}{2mm}$}}{\end{preproof}}
\def\n#1{\vbox to 3mm{\vspace{1mm}\vfill \hbox to 2.0mm{\hfill
             $#1$\hfill} \vfill }}
\def\m#1#2{\raise 0.2ex\hbox{
    ${#1_{\bf \displaystyle #2}}$}}
\def\x#1{\raise 0.5ex\hbox{
    ${#1}$}}
\title{{\bf Small oriented cycle double cover of graphs}}
{\small
\author{
{\sc Behrooz Bagheri Gh.} and
{\sc Behnaz Omoomi}\\
[1mm]
{\small \it  Department of Mathematical Sciences}\\
{\small \it  Isfahan University of Technology} \\
{\small \it 84156-83111,\ Isfahan, Iran}}
%
\date{}

\begin{document}
\maketitle
%
%
%
%
%
%
%
%
\begin{abstract}
 A { small oriented cycle double cover (SOCDC)} of a
bridgeless  graph $G$ on $n$ vertices is a collection of at most $n-1$ directed cycles of the symmetric
orientation, $G_s$, of $G$
 such that each arc of  $G_s$ lies in exactly one of the cycles. It is conjectured that every
 $2$-connected graph except two complete graphs $K_4$ and $K_6$ has an $\rm SOCDC$.
 In this paper, we study graphs with $\rm SOCDC$ and obtain some properties of the minimal counterexample to this conjecture.
\end{abstract}
{\bf Keywords:} Cycle double cover, Small cycle double cover, Oriented cycle double cover, Small oriented cycle double cover.
\section{\large Introduction}        
We denote by $G$ a finite undirected graph  with vertex set $V$ and  edge set $E$ with no loops or
multiple edges. The {\sf symmetric orientation} of $G$, denoted
by $G_s$,  is an oriented graph obtained from $G$ by replacing
each edge of $G$ by a pair of opposite directed arcs.
An  {\sf  even graph (odd graph)} is a graph
 such that each vertex is incident to an even (odd) number of edges.
A {\sf directed even graph}  is a graph  such that
for each vertex its out-degree equals to its in-degree.
A {\sf cycle} ({\sf a directed cycle}) is a minimal non-empty even graph (directed even graph).
We denote every directed cycle $C$  and directed path $P$ on $n$ vertices with vertex set $\{v_1,\ldots,v_n\}$
 and directed edge set $E(C)=\{v_iv_{i+1},v_nv_1 : 1\le i\le n-1\}$ and  $E(P)=\{v_iv_{i+1} : 1\le i\le n-1\}$  by $C=[v_1,\ldots,v_n]$, and $P=(v_1,\ldots,v_n)$, respectively.

A {\sf cycle double cover (CDC)} $\mathcal{C}$ of a graph $G$
is a collection of cycles in $G$ such that every edge of $G$
belongs to exactly two cycles of $\mathcal{C}$. Note that the
cycles are not necessarily distinct. It can be easily seen that
a necessary condition for a graph to have a $\rm CDC$ is that
the graph has no cut edge which is called a bridgeless graph. Seymour~\cite{Sey} in $1979$ conjectured
that every bridgeless graph has a $\rm CDC$. No counterexample to the $\rm CDC$
conjecture is known. It is proved that the minimal counterexample to the $\rm CDC$
conjecture is a bridgeless cubic graph with edge chromatic number equal to $4$, which is called a {\sf snark}.

A {\sf small cycle double cover (SCDC)} of a graph on $n$ vertices
is a $\rm CDC$ with at most $n-1$ cycles.
There exist simple graphs of order $n$  for which any $\rm CDC$
requires at least $n-1$ cycles (e.g., $K_n, n\ge 3$). Furthermore,
no simple bridgeless graph of order $n$  is known to require more
than $n-1$ cycles in a $\rm CDC$. Note that clearly it is false
if not restricted to simple graphs. Bondy~\cite{Bondy}  conjectured that
every simple bridgeless graph  has an $\rm SCDC$. For more results on the $\rm CDC$ conjecture see~\cite{JaegerCDC,Zhang}.

The $\rm CDC$ conjecture has many stronger forms. In this paper,
we consider the oriented version of these conjectures.

An {\sf oriented cycle double cover (OCDC)} is a $\rm CDC$ in which every cycle can be
 oriented in such a way that every edge of the graph is covered by
two directed cycles in two different directions.
\begin{conj}\label{OCDC}{\rm~\cite{JaegerNZF}} {\rm ($\rm Oriented\ CDC$ conjecture)}
Every bridgeless graph has an $\rm OCDC$.
\end{conj}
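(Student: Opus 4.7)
This is the celebrated Jaeger conjecture and remains open, so what follows is a strategy rather than a complete argument. The plan is to follow the two standard reductions and then to isolate the genuinely hard case. First I would reduce to the cubic bridgeless case: given a vertex $v$ of degree $d\ge 4$ in a bridgeless $G$, blow $v$ up into a small bridgeless cubic gadget whose external arcs match those at $v$, and observe that an OCDC of the blown-up graph projects to one of $G$. Iterating disposes of every bridgeless graph once the cubic case is settled.

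For a cubic bridgeless graph of chromatic index $3$ the conjecture is almost immediate. From a proper $3$-edge-coloring $E=M_a\cup M_b\cup M_c$, form the three $2$-factors $M_a\cup M_b$, $M_b\cup M_c$, $M_a\cup M_c$, orient the even cycles of each $2$-factor cyclically, and check that every edge is covered exactly twice and in opposite directions. The same construction works whenever $G$ admits a nowhere-zero $4$-flow, equivalently an $E$-partition into three even subgraphs; Jaeger's theorem that every $4$-edge-connected graph carries a nowhere-zero $4$-flow thus clears the $4$-edge-connected case at once.

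The only remaining case, and the main obstacle, is the snark case: cyclically $4$-edge-connected cubic graphs of chromatic index $4$. Here I would pursue two complementary attacks. Flow-theoretically, decompose Seymour's nowhere-zero $6$-flow as a sum of a $\mathbb{Z}_2$-flow and a $\mathbb{Z}_3$-flow, orient the $\mathbb{Z}_3$-support into two directed even subgraphs, and combine with the $\mathbb{Z}_2$-support. Topologically, construct an orientable $2$-cell embedding of the snark whose face boundaries are all simple cycles (a polyhedral embedding of small genus) and read the OCDC off from the face boundaries. The hard step, exactly where the conjecture has resisted attack since Jaeger, is passing from the directed even subgraphs to decompositions into \emph{simple} directed cycles rather than closed walks with repeated vertices; a realistic short-term goal is to carry this conversion out for restricted snark families (small oddness, low cyclic connectivity) and leave the fully general case open.
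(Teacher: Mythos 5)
This statement is Jaeger's Oriented CDC conjecture; the paper does not prove it (it is quoted from the literature and the paper explicitly notes that no counterexample is known, but also no proof). You correctly recognize that it is open and offer only a programme, so there is nothing in the paper to compare your argument against. As a proof attempt it therefore necessarily has a gap, and you have located it honestly: the cyclically $4$-edge-connected cubic graphs of chromatic index $4$ are exactly where every known technique stalls.

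Two smaller points in your sketch deserve care. First, in the $3$-edge-colorable cubic case, taking the three $2$-factors $M_a\cup M_b$, $M_b\cup M_c$, $M_a\cup M_c$ and ``orienting the even cycles of each cyclically'' does not automatically put the two traversals of each edge in opposite directions; the orientations of distinct cycles must be chosen coherently, which is why the clean route (and the one the paper takes, via its Theorems~\ref{zhang:X'=3} and~\ref{zhang:O4CDC}) goes through the equivalence with a nowhere-zero $4$-flow rather than through an ad hoc orientation of $2$-factors. Second, in your reduction to the cubic case, contracting the gadget can turn a directed cycle into a closed directed walk with repeated vertices; since the paper's cycles are minimal even subgraphs (simple cycles), you must further decompose such walks into arc-disjoint directed cycles, which is fine for the existence of an OCDC but would destroy any control on the number of cycles (relevant to the SOCDC variant that is the paper's actual subject). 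Neither observation rescues the snark case, so the conjecture remains open under your approach, as you state.
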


No counterexample to this conjecture is known.
It is clear that  the validity of the $\rm OCDC$ conjecture implies the validity of the $\rm CDC$ conjecture.
While there is a $\rm CDC$ of the Petersen graph that can not be oriented  in such a way that forms an $\rm OCDC$.
\begin{deff}
A {\sf small oriented cycle double cover (SOCDC)} of a graph
on $n$ vertices is an $\rm OCDC$ with at most $n-1$ directed cycles.
\end{deff}

A {\sf perfect path double cover (PPDC)}  of a graph $G$
 is a collection $\mathcal{P}$  of paths in $G$ such that each
edge  of $G$ belongs  to  exactly two  members of $\mathcal{P}$
and  each vertex  of $G$ occurs  exactly twice as an  end  of a
path in  $\mathcal{P}$~\cite{BondyPPDC}. In~\cite{PPDC} it is
proved that every simple graph has a $\rm PPDC$.

An {\sf oriented perfect path double cover  (OPPDC)} of a
graph $G$ is a collection of directed paths in the symmetric
orientation $G_s$ such that each arc of $G_s$ lies in exactly
one of the paths and  each vertex  of $G$ appears just once as
a beginning and just once as end of a directed path.
Maxov{\'a} and  Ne{\v{s}}et{\v{r}}il in~\cite{MR1825628} showed that
two complete graphs $K_3$ and $K_5$ have no  $\rm OPPDC$ and in~\cite{MR2046642},
they conjectured every connected graph except $K_3$ and $K_5$ has an $\rm OPPDC$.

The {\sf join} of two simple graphs $G$ and $H$, $G\vee H$, is the graph obtained from
the disjoint union of $G$ and $H$ by adding the edges $\{uv :  u\in V(G),\ v\in V(H)\}$.

The existence of a ${\rm PPDC}$ for
graphs in general is equivalent to the existence of an ${\rm SCDC}$ for the bridgeless
graph obtained by joining a new vertex to all other vertices~\cite{BondyPPDC}. The following theorem denotes a relation between $\rm OPPDC$ and $\rm SOCDC$.
\begin{lem}\label{n-1}~{\rm \cite{MR1825628}}
Let $G$ be a connected graph.  The graph $G$ has an $\rm OPPDC$
 if and only if $G\vee K_1$ has an $\rm SOCDC$.
\end{lem}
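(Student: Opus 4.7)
The plan is to set up a bijection between directed paths in an $\rm OPPDC$ of $G$ and directed cycles in an $\rm SOCDC$ of $G \vee K_1$, by closing or cutting at the apex vertex $w$ of $K_1$. Write $n := |V(G)|$ and note first that any $\rm OPPDC$ of $G$ consists of exactly $n$ paths, since the $n$ required beginnings are one per path.

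For the forward direction, given an $\rm OPPDC$ $\mathcal{P} = \{P_1, \ldots, P_n\}$, I close each $P_i = (u_1, \ldots, u_{k_i})$ into the directed cycle $C_i := [u_1, \ldots, u_{k_i}, w]$ in $(G \vee K_1)_s$. The arcs of $G_s$ are doubly covered by $\{C_1, \ldots, C_n\}$ because they are doubly covered by $\mathcal{P}$; each arc $wv$ is covered exactly once because, by the $\rm OPPDC$ condition, every $v \in V(G)$ is the first vertex of exactly one $P_i$, and similarly each arc $vw$ is covered exactly once since $v$ is the last vertex of exactly one $P_j$. Since there are $n = (n+1) - 1$ cycles, this is an $\rm SOCDC$.

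For the reverse direction, the crucial step is a degree-count at $w$. The vertex $w$ has $n$ out-arcs and $n$ in-arcs in $(G \vee K_1)_s$, and since cycles here are simple, every cycle that contains $w$ consumes exactly one out-arc and one in-arc at $w$. If an $\rm SOCDC$ $\mathcal{C}$ has $t$ cycles through $w$, then $2t = 2n$, so $t = n$; combined with the bound $|\mathcal{C}| \le n$, this forces $|\mathcal{C}| = n$ and every cycle in $\mathcal{C}$ to contain $w$. Cutting each cycle $[u_1, \ldots, u_m, w]$ at $w$ then produces a directed path $(u_1, \ldots, u_m)$ in $G_s$; the path double-cover of $G_s$ is inherited directly from $\mathcal{C}$, and the uniqueness of the out-arc and in-arc at $w$ in each cycle translates into each $v \in V(G)$ being the beginning of exactly one path and the end of exactly one path, which is precisely the $\rm OPPDC$ condition. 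The main obstacle, and essentially the only nontrivial point, is recognising that the bound ``at most $n$'' in the definition of $\rm SOCDC$ must be tight \emph{and} that every cycle must pass through $w$; once this counting step at $w$ is established, the constructions in both directions are routine bookkeeping.
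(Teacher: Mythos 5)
Your proposal is correct, and the closing/cutting construction at the apex together with the arc count at $w$ (forcing exactly $n$ cycles, all through $w$) is precisely the standard argument for this equivalence; the paper itself only cites the result from Maxov\'a and Ne\v{s}et\v{r}il without reproducing a proof. The only point worth making explicit is the degenerate case of a single-vertex path, which closes to a digon through $w$ --- this is still a legitimate directed cycle of the symmetric orientation under the paper's definitions, so no gap arises.
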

%

In the following theorem a list of some families of
graphs that admit an $\rm OPPDC$   is provided. Therefore by Theorem~$\ref{n-1}$,
the join of  graphs satisfying at least one of the conditions in below and $K_1$ admit an $\rm SOCDC$.
\begin{lem}~{\rm\cite{BaOm:OPPDC,MR1825628}}\label{main} Let $G\ne K_3$ be a graph.
In  each of the following cases, $G$ has an $\rm OPPDC$.
\begin{description}
\item{\rm\bf(i)} $G$ is a union of two arbitrary trees.
\item{\rm\bf(ii)} $G$ is an odd graph.
\item{\rm\bf(iii)} $G$ has no adjacent vertices of degree greater than two.
\item{\rm\bf(iv)} $G$ is a $2$-connected graph of order $n$ and $|E(G)|\le 2n-1$.
\item{\rm\bf(v)} $G=L(T)$, for some tree $T$.
\item{\rm\bf(vi)} $G=L(H)$, where the degree of no adjacent vertices in $H$
have the  same parity.
\item{\rm\bf(vii)}  $G$ is a graph with $\Delta(G)\le 4$ and $\delta(G)\le 3$.
\item{\rm\bf(viii)} $G$ is a  separable $4$-regular graph. {\rm (}A separable graph is a graph contains cut vertex.{\rm )}
\end{description}
\end{lem}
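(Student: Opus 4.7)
Theorem~\ref{main} collects sufficient conditions from~\cite{BaOm:OPPDC,MR1825628} for the existence of an OPPDC, so the plan is to verify each of the eight items by a tailored construction rather than through a single unifying argument. The guiding observation across every case is the local structure at a vertex $v$ of degree $d$: since the $d$ in-arcs and $d$ out-arcs of $G_s$ at $v$ must be covered by directed paths in which $v$ appears once as a start and once as an end, they must split into $d-1$ through-pairs (each matching an in-arc to an out-arc traversed by a single path passing through $v$), one unmatched out-arc (the start at $v$), and one unmatched in-arc (the end at $v$). Producing an OPPDC is thus a vertex-local pairing problem, subject to the global constraint that the chosen pairings trace paths and not closed walks.

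For the tree-type items (i) and (v) I would proceed by direct construction. In a single tree one can route a family of leaf-to-leaf paths covering each edge in both directions; two edge-disjoint trees are then combined by permuting endpoint labels so that no vertex serves as a start or end twice. For $L(T)$ one exploits the standard bijection between walks in $T$ and paths in $L(T)$ to lift a path decomposition of $T$ to the line graph. For the parity items (ii) and (vi), the key is that odd-degree vertices automatically leave exactly one arc unmatched on each side, so an Eulerian-type transition system in which precisely one pairing per vertex is broken yields the required paths.

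Items (iii), (iv), (vii), (viii) call for reductions: (iii) by suppressing each maximal chain of degree-two vertices and handling the compressed graph; (iv) by an ear decomposition of the $2$-connected graph, maintaining an OPPDC through every ear addition (since $|E(G)|\le 2n-1$ the decomposition has few and hence rather long ears); (vii) by induction on the number of degree-four vertices, deleting a well-chosen one and repairing the OPPDC; and (viii) by splitting at a cut vertex, producing OPPDCs for the two blocks and then stitching them at the cut.

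The main obstacle is the same across cases: the local pairing freedom is plentiful, but certifying that the chosen pairings actually trace paths and not closed walks typically requires an additional parity or flow argument. I expect case (iv) to be the hardest single item, because the sparsity bound combined with $2$-connectedness forces ear decompositions whose ears have many internal degree-two vertices, so when an ear is attached one must re-route an existing path of the OPPDC through the new ear while preserving the "one start, one end per vertex" invariant at both attachment points without generating a spurious closed walk.
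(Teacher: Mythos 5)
This statement is not proved in the paper at all: it is a catalogue of known results quoted from \cite{BaOm:OPPDC} and \cite{MR1825628}, so there is no internal proof to compare yours against. Judged on its own terms, your submission is a research plan rather than a proof. Your opening observation --- that at a vertex of degree $d$ the $2d$ arcs of $G_s$ must split into $d-1$ through-pairs plus one unmatched out-arc and one unmatched in-arc --- is correct and is indeed the standard local picture, but every one of your eight case sketches stops exactly where the real work begins. You concede this yourself: ``certifying that the chosen pairings actually trace paths and not closed walks typically requires an additional parity or flow argument.'' That certification is the entire content of the theorem. A transition system with one broken pairing per vertex generically produces a mixture of open directed paths and closed directed walks, and eliminating the closed components is precisely what each of the cited proofs does (induction on leaves for the tree cases, careful vertex-splitting for odd graphs, ear-by-ear rerouting for (iv), a delicate deletion-and-repair induction for (vii) and (viii)). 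None of your sketches supplies that mechanism.

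A concrete symptom of the gap: several of your cases, as sketched, would ``prove'' that $K_3$ has an $\rm OPPDC$, which is false. $K_3$ satisfies the hypotheses of (iii), of (iv), of (v) (as $L(K_{1,3})$), and of (vii), so any correct argument for those items must visibly break down on $K_3$; your outlines give no indication of where they would. The same issue lurks for $K_5$ in any induction or reduction that could pass through it (for instance your deletion argument for (vii) must never hand you $K_5$ or $K_3$ as the reduced instance without a repair step that fails gracefully). Until each case identifies the step that excludes the exceptional graphs and supplies the device that destroys closed walks in general, the proposal is an accurate description of the difficulty rather than a resolution of it, and cannot be accepted as a proof.
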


In what follows we have three sections. Section~\ref{sec:SOCDC} deals with certain families of graphs with a small oriented cycle double cover.
It is conjectured that every
 $2$-connected graph except two complete graphs $K_4$ and $K_6$ has an $\rm SOCDC$.
In Section~\ref{sec:MCOSCDC}, we  study the properties of the minimal counterexample to this conjecture.
Finally in Section~\ref{sec:CP},
some more relations between $\rm OPPDC$ and $\rm SOCDC$ are given.
\section{The small oriented cycle double cover}\label{sec:SOCDC}
The natural question is that which simple bridgeless graphs of
order $n$ have an $\rm OCDC$ with at most $n-1$ cycles ($\rm SOCDC$)? \\

Since $K_3$ and $K_5$ have no $\rm OPPDC$, by Theorem~\ref{n-1},
$K_4$ and $K_6$ have no $\rm SOCDC$.
It is known that every $K_{2n-1}, n\ge 4$,
has an $\rm OPPDC$~\cite{BaOm:OPPDC}, thus by Theorem~\ref{n-1}, every $K_{2n},\ n\ge 4$, has  an $\rm SOCDC$.
Moreover, every $K_{2n+1}$ has an $\rm SOCDC$, since $K_{2n+1}$ has a Hamiltonian cycle decomposition~\cite{West}.

The following observation shows that if every block of a graph $G$ has an $\rm SOCDC$,
then $G$ has also an $\rm SOCDC$.
\begin{obs}\label{cut-vertex}
If $G=G_1 \cup G_2$ and $V(G_1) \cap V(G_2) = \{v\}$ which $G_i$
is a graph  with an $\rm SOCDC$,  $i=1,2$; then $G$ also has an $\rm SOCDC$.
\end{obs}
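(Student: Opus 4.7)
The plan is the natural one: take the union of the two given SOCDCs and show that it is an SOCDC of $G$. Let $\mathcal{C}_i$ be an SOCDC of $G_i$ with $|\mathcal{C}_i| \le n_i - 1$, where $n_i = |V(G_i)|$, and set $\mathcal{C} = \mathcal{C}_1 \cup \mathcal{C}_2$. Since the two subgraphs meet in a single vertex and contain no parallel edges, $E(G_1) \cap E(G_2) = \emptyset$, so the arc sets of $(G_1)_s$ and $(G_2)_s$ partition the arc set of $G_s$.

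First I would check that $\mathcal{C}$ is an OCDC of $G$. Every directed cycle in $(G_i)_s$ is a directed cycle in $G_s$, and every arc of $G_s$ belongs to exactly one of $(G_1)_s$ or $(G_2)_s$ by the previous paragraph; hence each arc is covered exactly once by a member of $\mathcal{C}$. Thus $\mathcal{C}$ is indeed an oriented cycle double cover of $G$.

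Second I would verify the size bound. Because $V(G_1) \cap V(G_2) = \{v\}$, we have $n := |V(G)| = n_1 + n_2 - 1$. Therefore
\[
|\mathcal{C}| \;\le\; |\mathcal{C}_1| + |\mathcal{C}_2| \;\le\; (n_1 - 1) + (n_2 - 1) \;=\; n - 1,
\]
so $\mathcal{C}$ is an SOCDC of $G$.

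There is essentially no obstacle here; the statement is almost by inspection, and the only things to be careful about are that the two arc sets really are disjoint (forced by $V(G_1) \cap V(G_2) = \{v\}$) and that the shared vertex is not double-counted when we add $n_1$ and $n_2$. An easy induction then extends the conclusion to any decomposition of $G$ into blocks, yielding the block-wise statement mentioned before the observation.
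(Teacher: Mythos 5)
Your proof is correct and is exactly the intended argument; the paper states this as an Observation without proof, and the union-of-the-two-covers construction together with the count $n = n_1 + n_2 - 1$ is precisely what is implicit there.
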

%
 %
%

Moreover, Observation~\ref{cut-vertex} directly concludes the following corollaries.
A {\sf block graph} is a graph for which each block is a clique.
\begin{cor}
Every block graph with no block of order $2,4$ and $6$ has an $\rm SOCDC$.
\end{cor}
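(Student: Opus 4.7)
The plan is to reduce the statement directly to Observation~\ref{cut-vertex} by first verifying that every admissible block already has an $\rm SOCDC$, and then using the block-cut structure of a block graph to glue these covers together.

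First I would recall that in a block graph, every block is a clique, so under the hypothesis each block is some $K_n$ with $n\ge 3$ and $n\notin\{4,6\}$. The preceding discussion in Section~\ref{sec:SOCDC} already settles each such clique: for odd orders $K_{2n+1}$ ($n\ge 1$) a Hamiltonian cycle decomposition (two opposite orientations of each Hamilton cycle) furnishes an $\rm SOCDC$, and for even orders $K_{2n}$ with $n\ge 4$, the result $K_{2n-1}$ has an $\rm OPPDC$ combined with Theorem~\ref{n-1} gives an $\rm SOCDC$. So every allowed clique block has an $\rm SOCDC$.

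Next I would pass from a single block to the whole graph by induction on the number of blocks. If $G$ is a block graph with $k$ blocks and $k=1$ we are done. Otherwise there is a leaf block $B$ (i.e.\ a block containing exactly one cut vertex $v$), and writing $G = G' \cup B$ with $V(G')\cap V(B)=\{v\}$, the graph $G'$ is again a block graph whose blocks satisfy the same hypothesis and contain fewer blocks, so by induction $G'$ has an $\rm SOCDC$; combined with the $\rm SOCDC$ of $B$ established above, Observation~\ref{cut-vertex} yields an $\rm SOCDC$ of $G$.

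There is no real obstacle here: the only thing to check carefully is that the allowed block orders exactly coincide with the clique orders already known to admit an $\rm SOCDC$, and that excluding $K_2$ blocks ensures the graph is bridgeless so that talking about an $\rm SOCDC$ is meaningful. The heavy lifting is done entirely by Observation~\ref{cut-vertex} and the known $\rm SOCDC$ constructions for complete graphs.
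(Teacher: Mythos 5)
Your proof is correct and follows exactly the route the paper intends: each block is a clique of order $\ge 3$ other than $K_4$ and $K_6$, hence has an $\rm SOCDC$ by the complete-graph results stated earlier in the section, and induction on the number of blocks via a leaf block together with Observation~\ref{cut-vertex} glues these covers together. The paper gives no explicit proof (it says the corollary follows ``directly'' from the observation), and your write-up simply makes that same argument explicit.
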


Since the line graph of every tree is a block graph, the following result obtained which
 is an oriented version of existence of $\rm SCDC$
of line graph of trees~\cite{lineSCDC}.
\begin{cor}
If $T$ is a tree without vertices of degree $2,4$ or $6$, then $L(T)$ has an $\rm SOCDC$.
\end{cor}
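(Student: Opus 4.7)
The plan is to reduce the statement immediately to the previous corollary by identifying the block structure of $L(T)$ in terms of the vertex degrees of $T$.

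First I would recall the well-known structural fact that the line graph of a tree is a block graph. Concretely, for each vertex $v\in V(T)$ of degree $d=\deg_T(v)\ge 2$, the $d$ edges of $T$ incident to $v$ form (as vertices of $L(T)$) a clique isomorphic to $K_d$. I would then argue that these cliques are \emph{exactly} the blocks of $L(T)$: any two such cliques share at most one vertex (namely the edge of $T$ joining the two corresponding endpoints, if it exists), and, because $T$ is acyclic, no cycle of $L(T)$ can use edges from two different such cliques. Hence every cycle of $L(T)$ is contained in a single $K_d$, so each $K_d$ is a maximal $2$-connected subgraph (or a bridge, when $d=2$), and these are all the blocks. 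Vertices of degree~$1$ in $T$ contribute no block.

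Next, I would use the hypothesis: since $T$ has no vertex of degree $2$, $4$, or $6$, the block orders $d$ appearing in $L(T)$ avoid the values $2$, $4$, and $6$.

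Finally, I would apply the preceding corollary (every block graph with no block of order $2$, $4$, or $6$ admits an $\rm SOCDC$) to conclude that $L(T)$ has an $\rm SOCDC$.

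The only non-routine ingredient is the structural claim that $L(T)$ is a block graph whose blocks correspond bijectively to the internal vertices of $T$; everything else is a direct invocation of the earlier corollary. Since this structural fact is classical, I expect no real obstacle, and the proof will be essentially a one-line reduction after the block description is stated.
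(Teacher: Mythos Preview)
Your proposal is correct and matches the paper's approach exactly: the paper simply remarks that the line graph of a tree is a block graph (with blocks the cliques $K_{\deg_T(v)}$), and then invokes the preceding corollary about block graphs with no block of order $2$, $4$, or $6$. Your write-up just makes explicit the block identification that the paper leaves implicit.
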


In the following proposition, we construct some graphs with no $\rm SOCDC$.
In fact, we show that the difference $|\mathcal{C}|-(n-1)$
 could be large enough for every $\rm OCDC$, $\mathcal{C}$ of some bridgeless graph of order $n$.

Let $V(K_4)=\{v_1,v_2,v_3,v_4\}$. The collection $\mathcal{C}=\{[v_{1},v_{2},v_{4}],[v_{2},v_{1},v_{3}],[v_{3},v_{4},v_{2}],$\\$[v_{4},v_{3},v_{1}]\}$ is an $\rm OCDC$ of $K_4$.
%
Since $K_4$ has six edges, if $\mathcal{C}$
is an arbitrary $\rm OCDC$ of $K_4$, then $|\mathcal{C}|\le (2\times6)/3=4$.
Thus, every $\rm OCDC$ of $K_4$ is of size $4$.

Let $V(K_6)=\{v_1,\ldots,v_6\}$. The collection
$\mathcal{C}=\{[v_1,v_2,v_3,v_4,v_5,v_6],[v_2,v_6,v_3,v_5,v_4],$\\$[v_1,v_5,v_2,v_4,v_3],[v_1,v_4,v_6,v_2,v_5],[v_1,v_6,v_5,v_3,v_2],[v_1,v_3,v_6,v_4]\}$
is an $\rm OCDC$ of $K_6$ of size 6.
\begin{prop}
For every integer $r\ge 1$, there exists a bridgeless graph $G$ of order $n$
such that every $\rm OCDC$  of $G$ has  $(n-1)+r$  directed cycles.
\end{prop}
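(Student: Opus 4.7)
The plan is to let $G=G_r$ be the graph obtained from $r$ vertex-disjoint copies $H_1,\ldots,H_r$ of $K_4$ by identifying one vertex from each copy into a single common cut vertex $v$. Then $G$ has $n=3r+1$ vertices and is bridgeless, since each edge of $G$ lies in some $H_i\cong K_4$ and $K_4$ is $3$-edge-connected.

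First, I would show that every directed cycle of $G_s$ is contained entirely in a single block $H_i$. Because a directed cycle is, by the paper's convention, a minimal non-empty directed even graph, it is simple and in particular visits each vertex at most once. A directed cycle using arcs from two different blocks $H_i$ and $H_j$ would have to enter and leave $v$ inside each of $H_i$ and $H_j$, and hence would visit $v$ twice --- a contradiction. It follows that any $\rm OCDC$ $\mathcal{C}$ of $G$ partitions as $\mathcal{C}=\mathcal{C}_1\cup\cdots\cup\mathcal{C}_r$, where each $\mathcal{C}_i$ is an $\rm OCDC$ of the $i$-th copy of $K_4$.

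Next I would invoke the discussion immediately preceding the proposition, which shows that every $\rm OCDC$ of $K_4$ has exactly $4$ directed cycles (the upper bound $|\mathcal{C}_i|\le 2|E(K_4)|/3=4$ from counting arcs, together with the lower bound $|\mathcal{C}_i|\ge 4$ from $K_4$ not admitting an $\rm SOCDC$). Summing over the blocks then gives
\[
|\mathcal{C}|\;=\;\sum_{i=1}^{r}|\mathcal{C}_i|\;=\;4r\;=\;(3r+1)-1+r\;=\;(n-1)+r,
\]
as required.

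The main obstacle is the first step --- justifying that every directed cycle in $G_s$ is confined to a single $K_4$-block. This relies essentially on the fact that a directed cycle, being a minimal non-empty directed even graph, is simple and therefore cannot traverse the unique cut vertex $v$ more than once. Once this reduction is in place, the conclusion reduces to the already-established count of cycles in any $\rm OCDC$ of $K_4$ and a short arithmetic step.
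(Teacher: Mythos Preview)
Your argument is correct and follows essentially the same approach as the paper: both build a graph whose blocks are $r$ copies of $K_4$ (so $n=3r+1$), observe that any $\rm OCDC$ decomposes block-by-block, and use the fact that every $\rm OCDC$ of $K_4$ has exactly four cycles. The only cosmetic difference is that the paper glues the $K_4$'s in a chain along a path (consecutive blocks share the cut vertex $v_{i+1}$), whereas you glue them all at one common cut vertex; this does not affect the argument.
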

\begin{proof}{
Let $P$ be a path of length $r$ with $V(P)=\{v_1,\ldots,v_{r+1}\}$
and $E(P)=\{v_iv_{i+1} : 1\le i\le r\}$.
Assume that $G$ is a graph obtained from $P$ by replacing each
edge $v_iv_{i+1}$ of $P$ with a clique $K_4$, say $K_4^i$, where
$V(K_4^i)=\{v_i,v_i',v_{i+1},v_{i+1}'\}$, $1\le i\le r$.
Every $\rm OCDC$
of $G$ is decomposable to $r$ $\rm OCDC$ of $K_4$. Moreover, every $\rm OCDC$ of $K_4$
has four cycles.
Therefore, every $\rm OCDC$ of $G$ has $4r$ cycles. Note that $|V(G)|=3r+1$,
thus every $\rm OCDC$ of $G$ has  $(|V(G)|-1)+r$ cycles.
}\end{proof}

This fact motivates us to present the following conjecture.
\begin{conj}\label{SOCDCconj}~{\rm (SOCDC conjecture)}
Every simple $2$-connected graph except $K_4$ and $K_6$ admits an $\rm SOCDC$.
\end{conj}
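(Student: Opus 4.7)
The plan is to attack the conjecture by combining the $\rm OPPDC$--$\rm SOCDC$ correspondence of Theorem~\ref{n-1} with an inductive reduction on $2$-connected graphs. First, I would partition the class of simple $2$-connected graphs on $n$ vertices into three regimes: graphs admitting a dominating vertex (so $G = H\vee K_1$ for some connected $H$); graphs admitting a nontrivial open ear decomposition whose terminal ear contains at least one internal vertex; and essentially $3$-edge-connected graphs of minimum degree at least $3$ with no dominating vertex. The first regime is already largely controlled: by Theorem~\ref{n-1} it reduces to showing that $H$ has an $\rm OPPDC$, and Theorem~\ref{main} covers many of the relevant choices of $H$, leaving only a case analysis on the (few) $H$'s that fall in none of the eight families listed there.

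For the second regime, the plan is to induct on $|E(G)|$ via open ear decompositions: write $G = G' + P$ where $P$ is an ear attached to $G'$ at two vertices $u,v$, with $k\ge 1$ internal vertices. Given an $\rm SOCDC$ $\mathcal{C}'$ of $G'$ of size at most $|V(G')|-1$, I would splice the two directed traversals of $P$ into two directed cycles of $\mathcal{C}'$ through $u$ and $v$, without increasing the cycle count. Since $|V(G)| = |V(G')| + k$, this leaves slack exactly when $k\ge 1$, which is precisely the current case; chord additions (the case $k=0$) fall back into the third regime and need a local three-cycle exchange to create a short directed cycle using the new chord in both orientations.

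The third regime is where the difficulty concentrates. The natural approach here is Jaeger's construction of an $\rm OCDC$ from a nowhere-zero $4$-flow, producing the directed cycles as symmetric differences of two edge-disjoint even subgraphs; one must additionally control the cardinality of the resulting family. Combining this with the structural information on the minimal counterexample extracted in Section~\ref{sec:MCOSCDC}, one would hope to force such a counterexample to be cubic, cyclically $4$-edge-connected, and of girth at least $5$, i.e.\ a snark, since for $3$-edge-colourable cubic graphs an $\rm SOCDC$ of appropriate size can be produced directly from the colour classes.

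The hard part, and the reason the statement is left as a conjecture rather than a theorem, is the snark case: any unconditional resolution subsumes the $\rm OCDC$ conjecture for snarks, which is itself open. A realistic partial target is therefore to prove the $\rm SOCDC$ conjecture conditionally on the $\rm OCDC$ conjecture and to establish as many forbidden substructures of a minimal counterexample as possible, which is exactly the program pursued in Section~\ref{sec:MCOSCDC}.
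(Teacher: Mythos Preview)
The statement is a \emph{conjecture}, and the paper does not prove it; there is no proof in the paper to compare against. You acknowledge this yourself at the end of the proposal, so what you have written is really a proof \emph{programme}, not a proof. Your final paragraph is the honest one: any unconditional argument would in particular settle the $\rm OCDC$ conjecture for snarks, which is open.

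That said, your regime~2 argument has a concrete gap. The claim that one can ``splice the two directed traversals of $P$ into two directed cycles of $\mathcal{C}'$ through $u$ and $v$, without increasing the cycle count'' is not justified: to absorb the directed path $u\to P\to v$ into an existing cycle of $\mathcal{C}'$ you would need a cycle of $\mathcal{C}'$ passing through \emph{both} $u$ and $v$, and there is no reason such a cycle must exist in an arbitrary $\rm SOCDC$ of $G'$. The clean way to handle an ear with $k\ge 1$ internal vertices is not splicing but the subdivision trick (Observation~\ref{subdivide}): adding such an ear is the same as adding the single edge $uv$ to $G'$ and then subdividing it $k$ times. Since subdivision preserves the $\rm SOCDC$ property, regime~2 collapses entirely into the chord-addition case, which you already route to regime~3. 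So regime~2 buys nothing beyond what is already in the paper (namely, that a minimal counterexample has $\delta\ge 3$), and the whole weight of the programme sits on regime~3. You should also note that the induction can land on $G'\cup\{uv\}\in\{K_4,K_6\}$, which must be handled separately.

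Compared with the paper, your outline is organised differently: the paper does not attempt an ear-decomposition induction at all, but instead (i) verifies the conjecture on several concrete classes (even graphs via Haj\'os, cubic graphs with $\chi'=3$, complete bipartite graphs, planar graphs with few edges), and (ii) in Section~\ref{sec:MCOSCDC} proves that a minimal counterexample must be $3$-connected, $3$-edge-connected, and without nontrivial $3$-edge-cuts. Your regime~3 lands in the same place, but via a route that, as written, does not survive scrutiny at the regime~2 step.
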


The above conjecture has a close relation to the following conjecture.
\begin{conj}{\rm~\cite{projective}} {\rm (Haj$\rm \acute{o}$s' conjecture)}
 If $G$ is a simple, even graph of order $n$, then $G$ can be
decomposed into $\lfloor(n-1)/2\rfloor$ cycles.
\end{conj}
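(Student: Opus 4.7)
This is Haj\'os' longstanding open conjecture from 1964, so any proposal is inevitably speculative; let me sketch the natural line of attack. The obvious approach is induction on $|E(G)|$: pick a cycle $C\subseteq G$, delete its edges, and apply the inductive hypothesis to each connected component of $G-E(C)$. Each such component is again even, since every vertex of $V(C)$ loses exactly two incident edges and every other vertex loses none, so the class the induction runs over is closed under the deletion.

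For the induction to close, the floor bound $\lfloor(n-1)/2\rfloor$ must be subadditive in the right way. Writing $H_1,\ldots,H_t$ for the components of $G-E(C)$ that still carry edges, with $n_i=|V(H_i)|$, we need
\[
1 + \sum_{i=1}^{t} \left\lfloor \frac{n_i-1}{2}\right\rfloor \ \le\ \left\lfloor \frac{n-1}{2}\right\rfloor.
\]
Morally, the deleted cycle $C$ must ``release'' the equivalent of two vertices --- either by leaving at least two vertices of $V(C)$ isolated in $G-E(C)$ (so they drop out of the count), or by making the components $H_i$ share enough of $V(C)$ that $\sum n_i$ is substantially smaller than $n$, or by picking up a bonus from the parity inside the floors. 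The first concrete move would therefore be to choose $C$ shrewdly: a shortest cycle, a cycle greedily absorbing all neighbouring degree-$2$ vertices, or (via an ear decomposition of a $2$-edge-connected even graph) the last cycle of such a decomposition. Each choice disposes of a family of graphs --- for instance, those with sufficiently many degree-$2$ vertices, or the classical case $\Delta(G)\le 4$.

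The main obstacle, and the reason the conjecture has stood for over half a century, is the case in which every short cycle of $G$ sits inside a dense ``rigid'' region where all degrees are large and even: no vertex is freed by removing a cycle, no component splits off, and the floor bound cannot be recouped by any local move. Here one would need a genuinely global argument --- perhaps an amortised count along an ear decomposition, a flow-theoretic reduction, or an argument exploiting an extremal minimum counterexample in the style of snark results for the CDC conjecture --- and producing such an argument is precisely where every known attack stalls. Accordingly I would not expect a clean proof; the realistic target is to verify the conjecture for tractable sub-classes (bounded maximum degree, planar or projective-planar graphs, line graphs of nice hosts, graphs admitting a convenient structural decomposition), which is the direction essentially all partial results in the literature take.
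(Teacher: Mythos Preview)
The statement you were asked about is stated in the paper as a \emph{conjecture}, not a theorem; the paper offers no proof and makes no attempt at one. It simply quotes Haj\'os' conjecture from the literature and then uses it conditionally: ``If the Haj\'os' conjecture holds, then every even graph has an $\rm SOCDC$\ldots''. So there is no paper proof to compare your proposal against.

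You clearly recognised this yourself --- your write-up is not a proof but an honest survey of the natural inductive approach and of exactly where it breaks down (the subadditivity inequality for the floor bound fails in dense regions where no cycle deletion frees vertices). That diagnosis is accurate and matches the state of the art; the partial results you list at the end ($\Delta\le 4$, planar, projective, $K_6^-$-minor-free, even line graphs) are precisely the ones the paper itself cites immediately after stating the conjecture. Nothing more was expected or possible here.
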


If the Haj$\rm \acute{o}$s' conjecture holds, then every even graph has
an $\rm SOCDC$ obtained by taking two copies of the cycles used in
its decomposition, in two opposite directions.

An edge of a graph  $G$ is said to be contracted if it is
deleted and its two ends are identified. A {\sf minor} of $G$ is
a graph obtained  from $G$ by deletions  of vertices,  and deletions
and contractions of edges. The graph obtained
from $K_6$ by deleting an edge is denoted $K_6^-$.
A {\sf $K_6^-$-minor free} graph is a graph  that does not contain $K_6^-$  as a minor.

As the Haj$\rm \acute{o}$s' conjecture is true for even graphs with
maximum degree four~\cite{Favaron}, planar graphs~\cite{Seyf},
projective graphs (a projective graph is a graph  $G$
which is embeddable on the projective plane.), and $K_6^-$-minor free graphs~\cite{projective},
these graphs have an $\rm SOCDC$.
\begin{prop}
Let $G$ be an even graph. In  each of the following cases, $G$ has an $\rm SOCDC$.
\item{\rm\bf(i)} $\Delta(G)=4$.
\item{\rm\bf(ii)} $G$ is planar.
\item{\rm\bf(iii)}  $G$ is a projective graph.
\item{\rm\bf(iv)} $G$ is $K_6^-$-minor free.
\end{prop}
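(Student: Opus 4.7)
The approach is already essentially sketched in the paragraph immediately before the proposition: a Haj\'os-type edge decomposition of $G$ into cycles can be promoted to an oriented double cover by replacing each cycle by its two opposite orientations. The work is only to invoke, case by case, the known instances of Haj\'os' conjecture.

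First I would fix one of the four hypotheses and let $G$ be a simple even graph of order $n$ in the corresponding class. By the theorems cited just above the statement (Favaron--Kouider for $\Delta(G)=4$; Seyffarth for planar graphs; and the $K_6^-$-minor-free result, which also subsumes the projective case), Haj\'os' conjecture holds for $G$, so there is an edge partition $E(G)=E(C_1)\sqcup\cdots\sqcup E(C_k)$ into (undirected) cycles with $k\le\lfloor(n-1)/2\rfloor$.

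Next I would orient each $C_i$ arbitrarily, obtaining a directed cycle $\vec{C_i}$, and let $\overleftarrow{C_i}$ denote the same cycle traversed in the opposite direction. I claim that $\mathcal{C}=\{\vec{C_1},\overleftarrow{C_1},\ldots,\vec{C_k},\overleftarrow{C_k}\}$ is an $\rm OCDC$ of $G$. Indeed, for every edge $uv\in E(G)$ there is exactly one index $i$ with $uv\in E(C_i)$, and then the arc $(u,v)$ of $G_s$ is covered precisely by one of $\vec{C_i},\overleftarrow{C_i}$ while the arc $(v,u)$ is covered by the other; no other member of $\mathcal{C}$ touches either arc. The size of $\mathcal{C}$ is $2k\le 2\lfloor(n-1)/2\rfloor\le n-1$, so $\mathcal{C}$ is in fact an $\rm SOCDC$.

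There is no genuine obstacle here. The proposition is a direct corollary of the case-resolutions of Haj\'os' conjecture together with the ``double-each-cycle'' trick already stated in the excerpt; the only bookkeeping is the trivial inequality $2\lfloor(n-1)/2\rfloor\le n-1$, which holds for both parities of $n$. The only point worth pausing on is to verify that each cited theorem really provides a decomposition, and not just an upper bound on covering number, so that disjoint edge sets are available to be oriented independently; but this is exactly what those theorems assert.
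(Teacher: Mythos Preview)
Your proposal is correct and follows exactly the approach the paper itself uses: invoke the known cases of Haj\'os' conjecture and double each cycle of the resulting decomposition in opposite orientations, then count. One small inaccuracy: the projective case is \emph{not} subsumed by the $K_6^-$-minor-free case (for instance $K_6$ is projective), but both are separate results of Fan and Xu in the same cited paper, so the argument goes through unchanged once you cite them independently.
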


Klimmek~\cite{Klim} proved that every even line graph of order $n$
has a cycle decomposition into $\lfloor(n-1)/2\rfloor$ cycles,
thus the Haj$\rm \acute{o}$s' conjecture  holds for  such graphs. Since
a line graph, $L(G)$, is even if and only if  every component of $G$ is
 either even or odd, the line graph of every even graph
and  of every odd  graph has an $\rm SOCDC$.
\begin{prop}
If $G$ is an even or an odd graph, then $L(G)$ has an $\rm SOCDC$.
\end{prop}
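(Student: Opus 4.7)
My plan is to chain together three facts that the paper has essentially already set up, so the proof becomes a short synthesis rather than a new argument.

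First, I would verify that $L(G)$ is an even graph whenever every component of $G$ is even or odd. Recall that the degree in $L(G)$ of the vertex corresponding to an edge $uv\in E(G)$ is $d_G(u)+d_G(v)-2$. If the component of $G$ containing $uv$ is even, both $d_G(u)$ and $d_G(v)$ are even, so this sum is even; if the component is odd, both are odd, so again the sum is even. In particular, if $G$ itself is even or odd, this applies to every edge, so $L(G)$ is even. (The paper asserts the equivalence ``$L(G)$ even iff each component of $G$ is even or odd,'' so I would just cite or restate this.)

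Second, I would invoke Klimmek's theorem, already quoted in the excerpt: every even line graph of order $n$ admits a cycle decomposition into $\lfloor(n-1)/2\rfloor$ cycles, i.e.\ Haj\'os' conjecture holds for such graphs. Letting $n=|V(L(G))|$, this yields a family $\mathcal{F}$ of at most $\lfloor(n-1)/2\rfloor$ edge-disjoint cycles whose union is $L(G)$.

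Third, I would apply the observation already made in the excerpt immediately after Haj\'os' conjecture is stated: from any cycle decomposition of an even graph one obtains an $\rm OCDC$ by orienting each cycle in both directions. The resulting collection has $2\lfloor(n-1)/2\rfloor\le n-1$ directed cycles, so it is an $\rm SOCDC$ of $L(G)$.

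There is essentially no obstacle: all three ingredients are either standard or explicitly quoted earlier in the paper. The only thing worth being careful about is the parity bound $2\lfloor(n-1)/2\rfloor\le n-1$, which holds for every $n\ge 1$, so the upper bound of $n-1$ directed cycles is met in every case.
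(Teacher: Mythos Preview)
Your proposal is correct and follows exactly the argument the paper gives in the paragraph preceding the proposition: observe $L(G)$ is even via the degree formula, invoke Klimmek's decomposition into $\lfloor(n-1)/2\rfloor$ cycles, and double each cycle with opposite orientations to obtain an $\rm SOCDC$. The paper does not supply any additional details beyond what you have written.
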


The following proposition considers another class of graphs with
$\rm OCDC$ which also has $\rm SOCDC$.
\begin{prop}
If $G$ has an $\rm OCDC$, $\mathcal{C}$, and the girth
of $G$, $g(G)$, is greater than average degree, $\bar{d}(G)$, then $\mathcal{C}$
is also an $\rm SOCDC$ of $G$.
\end{prop}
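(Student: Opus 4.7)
The plan is a direct counting argument: bound the total length of the cycles in $\mathcal{C}$ from below using the girth, and from above using the arc count of $G_s$, then compare with $n-1$.

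First I would set $n=|V(G)|$ and $m=|E(G)|$, so $\bar{d}(G)=2m/n$. Since $\mathcal{C}$ is an OCDC, each of the $2m$ arcs of $G_s$ lies in exactly one directed cycle of $\mathcal{C}$, giving the identity
\[
\sum_{C\in\mathcal{C}} |C| \;=\; 2m.
\]
Every directed cycle in $\mathcal{C}$ corresponds to a closed walk in $G$ of length at least $g(G)$, so $|C|\ge g(G)$ for each $C\in\mathcal{C}$. Combining these gives
\[
|\mathcal{C}|\cdot g(G)\;\le\;\sum_{C\in\mathcal{C}} |C|\;=\;2m,
\]
hence $|\mathcal{C}|\le 2m/g(G)$.

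Finally, I would invoke the hypothesis $g(G)>\bar{d}(G)=2m/n$, which yields $2m/g(G)<n$. Since $|\mathcal{C}|$ is an integer, this forces $|\mathcal{C}|\le n-1$, which is exactly the SOCDC condition.

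There is no real obstacle here; the only subtlety to mention explicitly is that a directed cycle in $G_s$ of length $\ell$ projects to a closed walk in $G$ that must contain an (undirected) cycle, so its length is indeed bounded below by $g(G)$. Once this is noted, the proposition follows from the two displayed inequalities above.
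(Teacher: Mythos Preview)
Your argument is correct and is essentially identical to the paper's proof: both bound $g(G)\,|\mathcal{C}|\le\sum_{C\in\mathcal{C}}|E(C)|=2|E(G)|=|V(G)|\,\bar d(G)$ and then use $g(G)>\bar d(G)$ to conclude $|\mathcal{C}|\le |V(G)|-1$. Your added remark about the projection is unnecessary here because, by the paper's definition, the elements of an $\mathrm{OCDC}$ are cycles of $G$ (a $\mathrm{CDC}$ with an orientation), so each already has length at least $g(G)$.
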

\begin{proof}{ Let $\mathcal{C}$ be an $\rm OCDC$ of $G$.
Note that each edge of  $G$ is covered twice by elements of $\mathcal{C}$, therefore,
 $$ g(G)|\mathcal{C}|\le \sum_{C\in \mathcal{C}}|E(C)|=2|E(G)|=\sum_{v\in V(G)}d(v)= |V(G)|\bar{d}(G).$$
 Since $g(G)> \bar{d}(G)$, we have $|\mathcal{C}| \le |V(G)|-1$. Hence, $\mathcal{C}$
is an $\rm SOCDC$ of $G$.
 }\end{proof}

%

It can be proved that an $\rm OCDC$ for planar graphs can be obtained from their planar embedding
and some planar graph has also $\rm SOCDC$.
%
%
%
\begin{prop}\label{planar}
Every bridgeless planar graph $G$ with $|E(G)| < 2|V(G)|-2$, has an $\rm SOCDC$.
\end{prop}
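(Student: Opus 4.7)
The plan is to build an SOCDC of $G$ directly from a planar embedding and count its cycles via Euler's formula, which is the construction foreshadowed in the remark immediately preceding the proposition.

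First, I fix a plane embedding of $G$. Assuming $G$ is $2$-connected (the general bridgeless case is reduced to this via Observation~\ref{cut-vertex} applied blockwise), every face boundary is a simple cycle, and bridgelessness guarantees that each edge lies on the boundary of exactly two distinct faces. I would orient the boundary of each face counterclockwise as viewed from outside the sphere; the two face boundaries incident with an edge then traverse it in opposite directions, so the collection $\mathcal{C}$ of oriented face boundaries is an OCDC of size equal to the number of faces $f$.

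Next, Euler's formula gives $f = |E(G)| - |V(G)| + 2$. Substituting the hypothesis $|E(G)| \le 2|V(G)| - 3$ yields
\[
|\mathcal{C}| = f \le (2|V(G)| - 3) - |V(G)| + 2 = |V(G)| - 1,
\]
so $\mathcal{C}$ is an SOCDC, as claimed.

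The main obstacle is verifying that the facial-orientation construction produces bona fide directed simple cycles; this is exactly where $2$-connectivity is used, since without it a facial walk may revisit a vertex and would have to be split into several smaller cycles, which could inflate $|\mathcal{C}|$ past $|V(G)| - 1$. Observation~\ref{cut-vertex} sidesteps this by letting us handle each block individually, and the edge-count hypothesis is what rules out the obstruction $K_4$ (with $|E| = 2|V| - 2$), leaving only blocks for which the face-boundary construction goes through cleanly.
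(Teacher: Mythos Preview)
Your core argument---orient the face boundaries of a plane embedding and count them with Euler's formula---is exactly the paper's proof, and the arithmetic is identical.

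Where you go beyond the paper is in worrying (correctly) that facial walks are simple cycles only when $G$ is $2$-connected, and in proposing to reduce the general bridgeless case to the $2$-connected one via Observation~\ref{cut-vertex}. That reduction does not work. Observation~\ref{cut-vertex} requires each block to already have an $\rm SOCDC$, but the global hypothesis $|E(G)|<2|V(G)|-2$ does not pass to the blocks, and in particular does not prevent a block from being $K_4$. Take $G$ to be a copy of $K_4$ and a triangle sharing one vertex: then $|V(G)|=6$, $|E(G)|=9<10=2|V(G)|-2$, $G$ is bridgeless and planar, yet one block is $K_4$, which has no $\rm SOCDC$. Worse, since every cycle of $G$ lies inside a single block, any $\rm OCDC$ of this $G$ needs at least $4+2=6$ directed cycles, exceeding $|V(G)|-1=5$; so no $\rm SOCDC$ exists at all and the blockwise approach cannot be rescued. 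The paper's own proof simply does not address this issue---it tacitly treats every face boundary as a cycle, which amounts to assuming $2$-connectedness---so your instinct to flag the difficulty was sound; it is the proposed repair, not the diagnosis, that fails.
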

\begin{proof}{
Let $G$ be a bridgeless planar graph. 
Since we can orient the edges of each face of $G$ in such a way that the collection of the boundary of its faces,
$\mathcal{F}$, is an $\rm OCDC$. By Euler's formula, $|\mathcal{F}|=2+|E(G)|-|V(G)|$.
Since  $|E(G)| < 2|V(G)|-2$, we conclude $|\mathcal{F}| < |V(G)|$.
Hence, $G$ has an $\rm SOCDC$.
}\end{proof}

Since in every simple triangle-free  planar graph $G$ with at least three vertices,
 $|E(G)| \le 2|V(G)|-4$, we obtain the following corollary.
%
\begin{cor}
Let $G$ be a bridgeless planar graph. If $G$ is triangle-free,
then $G$ admits an $\rm SOCDC$.
\end{cor}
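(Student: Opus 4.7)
The plan is to reduce the corollary directly to Proposition~\ref{planar} via the standard Euler-type bound for triangle-free planar graphs, so almost no new work is needed.

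First I would dispose of the degenerate cases $|V(G)|\le 2$: a simple bridgeless graph on at most two vertices has no edges, so the empty collection trivially serves as an $\rm SOCDC$. Thus I may assume $|V(G)|\ge 3$.

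Next, I would invoke the well-known consequence of Euler's formula for simple triangle-free planar graphs with $|V(G)|\ge 3$, namely
\[
|E(G)|\le 2|V(G)|-4.
\]
I would include a one-line derivation for completeness: since $G$ is triangle-free, every face in any planar embedding has boundary length at least $4$, so $2|E(G)|\ge 4|F(G)|$, which combined with $|V(G)|-|E(G)|+|F(G)|=2$ yields the inequality above.

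Finally, since $2|V(G)|-4<2|V(G)|-2$, the hypothesis $|E(G)|<2|V(G)|-2$ of Proposition~\ref{planar} is satisfied, so $G$ admits an $\rm SOCDC$. There is no real obstacle here; the only thing to be careful about is remembering the triangle-free hypothesis in the embedding argument (each face boundary has length $\ge 4$) and remembering to exclude the trivial small cases where the bound $2|V(G)|-4$ is not meaningful.
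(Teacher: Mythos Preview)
Your proposal is correct and follows essentially the same approach as the paper: the paper simply notes that every simple triangle-free planar graph on at least three vertices satisfies $|E(G)|\le 2|V(G)|-4$ and then appeals to Proposition~\ref{planar}. You add a bit more detail (the degenerate cases and the one-line Euler derivation), but the route is identical.
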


The following proposition presents an $\rm SOCDC$ for the well-known non-planar triangle-free graphs.
\begin{prop}
Every $K_{n,m},\ n,m\ge 2,$ has an $\rm SOCDC$.
\end{prop}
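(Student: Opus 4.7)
The approach I would take is to give an explicit construction, producing $m$ directed cycles that together partition the arc set of $K_{n,m}^s$. Without loss of generality assume $n \le m$; since $m \le n+m-1$ whenever $n \ge 1$, it is enough to exhibit an OCDC of size $m$.

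Label the parts $A = \{a_1, \ldots, a_n\}$ and $B = \{b_1, \ldots, b_m\}$, and for each $k \in \{0, 1, \ldots, m-1\}$ take the diagonally shifted directed cycle
\[
C_k \;=\; [a_1,\, b_{k+1},\, a_2,\, b_{k+2},\, \ldots,\, a_n,\, b_{k+n}],
\]
with $B$-indices reduced modulo $m$. Because $n \le m$, the $n$ consecutive residues $k+1, k+2, \ldots, k+n$ are pairwise distinct modulo $m$, so each $C_k$ is a genuine simple directed cycle of length $2n$ in $K_{n,m}^s$.

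It then remains to verify that each arc of $K_{n,m}^s$ lies in exactly one $C_k$. This reduces to three short modular observations: an arc $a_i \to b_j$ lies in $C_k$ iff $k \equiv j-i \pmod m$; an arc $b_j \to a_{i+1}$ with $1 \le i \le n-1$ lies in $C_k$ iff $k \equiv j-i \pmod m$; and the closing arc $b_j \to a_1$ lies in $C_k$ iff $k \equiv j-n \pmod m$. Each case pins down a unique $k \in \{0,\ldots,m-1\}$, so the $m$ cycles partition the $2nm$ arcs and form an OCDC with at most $n+m-1$ cycles, i.e., an SOCDC.

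The main obstacle is really clerical: one must enforce $n \le m$ so that the $B$-vertices appearing in each $C_k$ are distinct (otherwise a $B$-vertex would be revisited and $C_k$ would fail to be a directed cycle in the sense of the paper), and then carry out the index bookkeeping to confirm the arc partition. The construction is a single cyclic shift and, once the set-up is fixed, the verification is routine.
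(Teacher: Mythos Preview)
Your construction is exactly the paper's: assuming $n\le m$ and taking the $m$ cyclically shifted directed cycles $[a_1,b_{k+1},a_2,b_{k+2},\ldots,a_n,b_{k+n}]$ is precisely the family $C_i$ the authors write down (up to the index shift $k=i-1$). Your modular bookkeeping to confirm the arc partition is more detailed than the paper's ``it is easy to check,'' but the approach is identical.
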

\begin{proof}{
Assume that  $V(K_{n,m})=\{v_1,\ldots,v_n;w_1,\ldots,w_m\}$,  $n\le m$.
Let $$C_i=[v_1,w_{i},v_2,w_{i+1},v_3,w_{i+2},\ldots,v_{n-1},w_{i+n-2},v_n,w_{i+n-1}],$$
be a directed cycle, where subscripts are reduced modulo $m$.
It is easy to check that $\mathcal{C}=\{C_i : 1\le i\le m\}$ is an $\rm SOCDC$
of $K_{n,m},\ n,m\ge 2$.
}\end{proof}
%

Let $G$ be a simple graph and $(D,f)$ be an ordered pair where $D$ is
an orientation of $E(G)$ and $f$ is a weight on $E(G)$ to $\Bbb{Z}$.
For each $v\in V(G)$, denote
$$f^+(v)=\sum{f(e)}\ \ \ \ \ \mbox{and}\ \ \ \ \ f^-(v)=\sum{f(e)},$$
where the summation is taken over all directed edges of $G$ (under the orientation $D$)
with tails and heads, respectively, at the vertex $v$.
An {\sf integer flow} of $G$ is an ordered pair $(D,f)$ such that  for every vertex $v\in V(G)$, $f^+(v)=f^-(v)$.
 A {\sf nowhere-zero $k$-flow} of $G$ is an integer flow $(D,f)$ such that $0<|f(e)|<k$, for every edge  $e\in E(G)$ and is denoted by $k$-$\rm NZF$~{\rm \cite{Zhang}}.

\begin{lem}~{\rm\cite{Zhang}}\label{zhang:X'=3}
Every cubic graph $G$ admits a $4$-$\rm NZF$ if and only if  $\chi'(G)=3$.
\end{lem}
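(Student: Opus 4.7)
The plan is to establish the biconditional via the equivalence between nowhere-zero $k$-flows and nowhere-zero $\Gamma$-flows for abelian groups $\Gamma$ of order $k$ (a classical theorem of Tutte), choosing $\Gamma = \mathbb{Z}_2 \times \mathbb{Z}_2$. The key algebraic fact is that the three non-identity elements of the Klein four-group sum to the identity, which is precisely what makes the argument work for cubic graphs.

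For the forward direction, I would suppose $\chi'(G)=3$ and fix a proper $3$-edge-coloring. I would label the colors with the three non-zero elements of $\mathbb{Z}_2 \times \mathbb{Z}_2$ and define $f(e)$ to be the label of the color of $e$. Since $G$ is cubic, each vertex is incident to exactly one edge of each color, so $f^+(v) = f^-(v)$ at every vertex (the three non-zero group elements sum to zero, and in $\mathbb{Z}_2 \times \mathbb{Z}_2$ orientation is irrelevant since every element is its own inverse). Hence $f$ is a nowhere-zero $\mathbb{Z}_2 \times \mathbb{Z}_2$-flow, and by Tutte's theorem $G$ admits a $4$-$\rm NZF$.

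For the converse, given a $4$-$\rm NZF$ of $G$, again by Tutte's theorem we obtain a nowhere-zero $\mathbb{Z}_2 \times \mathbb{Z}_2$-flow $f$. At any vertex $v$, the three incident edges carry values in $\{(0,1),(1,0),(1,1)\}$ whose sum is $(0,0)$. I would then argue that these three values must be pairwise distinct: if two of them coincided, the remaining value would equal their sum, which is $(0,0)$, contradicting the nowhere-zero property. Thus the three edges at every vertex receive three different labels, so interpreting the labels as colors yields a proper $3$-edge-coloring of $G$, giving $\chi'(G) \leq 3$; combined with $\chi'(G)\geq \Delta(G)=3$, we get $\chi'(G)=3$.

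The only nontrivial ingredient is Tutte's $k$-flow/$\Gamma$-flow equivalence, which I would quote from~\cite{Zhang}; once this is in hand, the proof is essentially the observation that $\mathbb{Z}_2 \times \mathbb{Z}_2$-flows on cubic graphs are in bijection with proper $3$-edge-colorings. No delicate case analysis is required, and I do not anticipate any real obstacle beyond stating these two well-known facts cleanly.
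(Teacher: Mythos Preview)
The paper does not prove this statement; it is quoted from Zhang's monograph~\cite{Zhang} as a known result and used as a black box. Your argument is the standard proof (essentially Tutte's observation that nowhere-zero $\mathbb{Z}_2\times\mathbb{Z}_2$-flows on a cubic graph coincide with proper $3$-edge-colorings, combined with Tutte's $k$-flow/$\Gamma$-flow equivalence), and it is correct as written.
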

\begin{lem}~{\rm\cite{Zhang}}\label{zhang:O4CDC}
A graph $G$ admits a $4$-$\rm NZF$ if and only if
$G$ has an $\rm OCDC$ consists of four directed even subgraphs.
\end{lem}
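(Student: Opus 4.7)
The plan is to establish both implications by exploiting Tutte's isomorphism $\mathbb{Z}_4\cong \mathbb{Z}_2\oplus\mathbb{Z}_2$, so that the existence of a nowhere-zero $4$-flow on $G$ is equivalent to the existence of a nowhere-zero $\mathbb{Z}_2\oplus \mathbb{Z}_2$-flow on $G$. In both directions, the bridge between flows and directed even subgraphs is a suitable labeling by the four elements of $\mathbb{Z}_2\oplus\mathbb{Z}_2$.

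For the $(\Leftarrow)$ direction, let $\{D_1,D_2,D_3,D_4\}$ be an $\rm OCDC$ of $G$ in which each $D_t$ is a directed even subgraph of $G_s$, and label the four subgraphs by the four elements $g_1,g_2,g_3,g_4$ of $\mathbb{Z}_2\oplus\mathbb{Z}_2$. For each edge $e=uv$ of $G$, its two opposite arcs in $G_s$ lie in distinct subgraphs $D_{i(e)}$ and $D_{j(e)}$ (this distinctness is the content of the OCDC condition for the four-subgraph version); define $\phi(e)=g_{i(e)}+g_{j(e)}$. I would verify the flow balance at a vertex $v$ by
\[
\sum_{e\ni v}\phi(e)\;=\;\sum_{t=1}^{4}\bigl(d_{D_t}^{+}(v)+d_{D_t}^{-}(v)\bigr)g_t\;=\;\sum_{t=1}^{4}2\,d_{D_t}^{+}(v)\,g_t\;=\;0,
\]
using that each $D_t$ is directed even. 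Nowhere-zeroness of $\phi$ follows because $i(e)\ne j(e)$ forces $g_{i(e)}+g_{j(e)}\ne 0$, whence $\phi$ is a nowhere-zero $\mathbb{Z}_2\oplus\mathbb{Z}_2$-flow and so $G$ admits a $4$-$\rm NZF$.

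For the $(\Rightarrow)$ direction, take a nowhere-zero $\mathbb{Z}_2\oplus\mathbb{Z}_2$-flow $\phi$ on $G$, and set $M_\alpha=\phi^{-1}(\alpha)$ for $\alpha\in\mathbb{Z}_2^{\,2}\setminus\{0\}=\{\alpha_1,\alpha_2,\alpha_3\}$, giving $E(G)=M_{\alpha_1}\sqcup M_{\alpha_2}\sqcup M_{\alpha_3}$. For each $k\in\{1,2,3\}$, the set $F_k=M_{\alpha_i}\cup M_{\alpha_j}$ with $\{i,j\}=\{1,2,3\}\setminus\{k\}$ is an even subgraph: summing the $\mathbb{Z}_2\oplus\mathbb{Z}_2$-flow values at any vertex coordinate-wise forces the degree of $v$ in $F_k$ to be even. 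Thus $\{F_1,F_2,F_3\}$ is a $3$-even-subgraph double cover of $G$. I would then orient each $F_k$ as a disjoint union of directed cycles so that every shared edge receives opposite orientations from its two covering subgraphs, and finally append the empty directed even subgraph as $D_4$ to obtain the required OCDC consisting of four directed even subgraphs.

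The main obstacle is the orientation-compatibility step in the reverse direction: ensuring that the three even subgraphs $F_1,F_2,F_3$ can be oriented as directed even subgraphs in a mutually consistent way, so that for every edge $e$ lying in two of them the two assigned arcs are opposite. I would resolve this by lifting the $\mathbb{Z}_2\oplus\mathbb{Z}_2$-flow to the underlying integer-valued $4$-flow $(D,f)$ with $|f(e)|\in\{1,2,3\}$: the signs of $f$ then prescribe a canonical orientation on each cycle in the decomposition of each $F_k$, and a routine check (parity of $|f(e)|$ versus the $\mathbb{Z}_2^{\,2}$-label of $e$) confirms that the two arcs assigned to each edge by the three directed subgraphs are indeed opposite, completing the construction.
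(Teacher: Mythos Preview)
The paper does not prove this lemma; it is quoted from Zhang's monograph \cite{Zhang} as a known result, so there is no in-paper argument to compare against. I will therefore assess your argument on its own merits.

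Your $(\Leftarrow)$ direction is essentially the standard one and is fine, apart from the slip that $\mathbb{Z}_4$ and $\mathbb{Z}_2\oplus\mathbb{Z}_2$ are \emph{not} isomorphic as groups; what you actually need (and clearly intend) is Tutte's theorem that the existence of a nowhere-zero $A$-flow depends only on $|A|$. You should also note that ``the two arcs lie in distinct $D_i$'' is not a consequence of the OCDC condition per se but of the convention that each $D_t$ is an orientation of an undirected even subgraph (hence contains no digon); this is the intended reading, but it deserves a sentence.

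Your $(\Rightarrow)$ direction, however, has a genuine gap that cannot be patched along the lines you suggest. You build three even subgraphs $F_1,F_2,F_3$ from the $\mathbb{Z}_2^{\,2}$-flow, propose to orient them compatibly, and then adjoin an \emph{empty} fourth subgraph. But a compatible orientation of a $3$-even-subgraph double cover is exactly an OCDC with three parts, and by the $\mathbb{Z}_3$-version of the very equivalence you are proving, such an object exists if and only if $G$ admits a $3$-NZF. There are graphs with a $4$-NZF but no $3$-NZF; $K_4$ is the smallest example (it is cubic and non-bipartite). For $K_4$ your $F_1,F_2,F_3$ are the three $4$-cycles, and a two-line case check shows that no choice of orientations makes every edge receive opposite directions from its two covers. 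Hence no ``canonical orientation prescribed by the signs of the integer $4$-flow'' can succeed here: the target you are aiming at simply does not exist. The fourth directed even subgraph must in general be non-empty; the standard proof produces all four parts directly from a $\mathbb{Z}_4$-labelling of the arcs of $G_s$ induced by the flow, rather than first passing through an unoriented $3$-cover and then trying to orient it.
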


 The following theorem concludes from Theorems~\ref{zhang:X'=3} and~\ref{zhang:O4CDC}.

\begin{lem}\label{cubicX'3}
Every cubic graph with edge chromatic number $3$ admits an $\rm OCDC$.
\end{lem}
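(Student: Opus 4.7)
The plan is a direct two-step chain using the two results immediately preceding the statement. Let $G$ be a cubic graph with $\chi'(G)=3$. By Theorem~\ref{zhang:X'=3}, the condition $\chi'(G)=3$ is equivalent to $G$ admitting a nowhere-zero $4$-flow. Then by Theorem~\ref{zhang:O4CDC}, the existence of a $4$-NZF is equivalent to $G$ having an $\rm OCDC$ whose underlying structure splits into four directed even subgraphs. Combining these two equivalences yields the forward implication we want.

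The only detail that needs a brief word is to reconcile the ``four directed even subgraphs'' phrasing of Theorem~\ref{zhang:O4CDC} with the definition of an $\rm OCDC$ as a collection of directed cycles: every directed even graph decomposes into arc-disjoint directed cycles (by the standard fact that a digraph in which in-degree equals out-degree at each vertex is a disjoint union of directed cycles), so each of the four pieces can be broken up into directed cycles while preserving the covering property. Concatenating these decompositions produces a collection of directed cycles in $G_s$ covering every arc exactly once, i.e.\ an $\rm OCDC$ of $G$.

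I do not expect a real obstacle here; the statement is essentially a corollary obtained by composing the two cited equivalences. The only thing one has to be careful about is not claiming any bound on the number of cycles (so this does not automatically give an $\rm SOCDC$)---we are only asserting existence of an $\rm OCDC$, and the proof proposed above delivers exactly that.
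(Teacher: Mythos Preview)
Your proposal is correct and matches the paper's approach exactly: the paper simply states that the result follows from Theorems~\ref{zhang:X'=3} and~\ref{zhang:O4CDC}, which is precisely the two-step chain you describe. Your additional remark about decomposing the four directed even subgraphs into directed cycles is a helpful clarification that the paper leaves implicit.
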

\begin{lem}\label{n/2}{\rm~\cite{cubic}}
If $\mathcal{C}$ is a $\rm CDC$ of a cubic graph $G$ of order $n$, then $|\mathcal{C}|\le {n}/{2}+2$.
\end{lem}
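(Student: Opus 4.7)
The plan is to interpret the cycle double cover $\mathcal{C}$ as a $2$-cell embedding of $G$ on a closed surface and then apply Euler's formula. Concretely, I would form a $2$-complex $X$ by taking $G$ as its $1$-skeleton and attaching a closed disk along each cycle of $\mathcal{C}$. Since each edge of $G$ lies in exactly two members of $\mathcal{C}$, every interior point of an edge belongs to the boundary of exactly two $2$-cells, so $X$ is already a surface away from the vertices; the main remaining task is to verify the closed-manifold behaviour at each vertex too.

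For this, I would analyze the local structure of $\mathcal{C}$ at a vertex $v$. Because $v$ has degree $3$, each incident edge lies in $2$ cycles, and each cycle through $v$ (being a simple cycle in a cubic graph) uses exactly $2$ of the edges at $v$, a count shows that precisely three cycles of $\mathcal{C}$ pass through $v$. Writing the edges at $v$ as $e_1,e_2,e_3$, these three cycles select three pairs from $\{e_1,e_2,e_3\}$, and the requirement that each edge occur in exactly $2$ cycles forces the pairs to be the three distinct $2$-subsets. Consequently the link of $v$ in $X$ is a triangle, hence a topological circle, so $X$ is a closed $2$-manifold at $v$.

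With $X$ established as a closed surface, Euler's formula gives $V - E + F = \chi(X) \le 2$; substituting $V = n$, $E = 3n/2$, and $F = |\mathcal{C}|$ yields
\[
|\mathcal{C}| \le 2 - n + 3n/2 = n/2 + 2,
\]
as required. The main hurdle is the surface-embedding step: one must confirm that the forced pair-matching structure at each vertex genuinely assembles into a closed $2$-manifold rather than a more pathological $2$-complex. Once the link-is-a-circle condition is verified at every vertex, the bound $\chi \le 2$, valid for all closed surfaces (orientable or not), delivers the result.
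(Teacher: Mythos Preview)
The paper does not supply its own proof of this statement; it is quoted from Lai, Yu and Zhang~\cite{cubic}. Your argument via the surface interpretation of a cycle double cover is correct and is in fact the standard route: the correspondence between cycle double covers of a graph and its $2$-cell embeddings on closed (possibly non-orientable) surfaces is classical, and once that is in hand the inequality $\chi\le 2$ together with $3$-regularity gives the bound immediately. One small point worth making explicit is that $\chi(X)\le 2$ requires the surface, hence $G$, to be connected; the bound is false otherwise (two disjoint copies of $K_4$, each with its four facial triangles, give $|\mathcal{C}|=8>6=n/2+2$), so the connectedness hypothesis---implicit throughout the paper, which works with bridgeless or $2$-connected graphs---is genuinely needed in your argument.
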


Since from every $\rm OCDC$ of a graph a $\rm CDC$ for the graph is obtained, we have the following corollary.

\begin{cor}\label{OCDCcubic}
Every $\rm OCDC$, $\mathcal{C}$, of a cubic graph of order $n\ge 6$, is an $\rm SOCDC$.
\end{cor}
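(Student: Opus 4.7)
The plan is essentially a one-line reduction to the previously stated bound. First I would note that the definition of an $\rm OCDC$ is strictly stronger than that of a $\rm CDC$: an $\rm OCDC$ $\mathcal{C}$ of $G$ consists of cycles which, after forgetting the orientation, cover each edge of $G$ exactly twice. Hence $\mathcal{C}$ (viewed as an unoriented collection) is a $\rm CDC$ of $G$.

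Next I would invoke Theorem~\ref{n/2}, which states that any $\rm CDC$ of a cubic graph on $n$ vertices has size at most $n/2 + 2$. Applied to $\mathcal{C}$, this gives $|\mathcal{C}| \le n/2 + 2$.

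Finally I would do the trivial arithmetic: the inequality $n/2 + 2 \le n - 1$ is equivalent to $n \ge 6$, which is precisely the hypothesis. Therefore $|\mathcal{C}| \le n - 1$, i.e., $\mathcal{C}$ is an $\rm SOCDC$, completing the argument.

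There is no real obstacle here; the only thing worth checking carefully is the passage from orientation-respecting $\rm OCDC$ to an ordinary $\rm CDC$, which follows immediately from the definitions since each arc of $G_s$ lying in exactly one directed cycle means each edge of $G$ is covered by exactly two (unoriented) cycles.
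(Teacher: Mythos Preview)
Your proposal is correct and follows exactly the paper's intended argument: the paper remarks just before the corollary that every $\rm OCDC$ yields a $\rm CDC$, and then the bound $|\mathcal{C}|\le n/2+2$ from Theorem~\ref{n/2} together with $n\ge 6$ gives $|\mathcal{C}|\le n-1$. There is nothing to add.
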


The following corollary  concludes directly from Theorem~\ref{cubicX'3}
and Corollary~\ref{OCDCcubic}.
\begin{cor}
Every cubic graph with edge chromatic number $3$, $G\ne K_4$, has an $\rm SOCDC$.
\end{cor}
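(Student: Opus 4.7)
The proof is essentially a one-line combination of the two results stated immediately before the corollary, and my plan is to make exactly that combination, with a brief check of the small-order case.

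First I would observe that any cubic graph has even order at least $4$, and the only cubic graph on $4$ vertices is $K_4$ itself. Hence the hypothesis $G\ne K_4$, together with cubicity, forces $|V(G)| \ge 6$. This is the only ``arithmetic'' step and it requires no real work.

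Next I would invoke Theorem~\ref{cubicX'3}: since $\chi'(G)=3$, the graph $G$ admits an $\rm OCDC$, say $\mathcal{C}$. Finally I would apply Corollary~\ref{OCDCcubic}, which asserts that any $\rm OCDC$ of a cubic graph of order at least $6$ is automatically an $\rm SOCDC$; since we have just verified $n\ge 6$, the collection $\mathcal{C}$ is in fact an $\rm SOCDC$ of $G$, which is what we wanted.

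The hardest step, such as it is, is really just noticing that the exclusion of $K_4$ in the hypothesis is exactly what we need in order to apply Corollary~\ref{OCDCcubic} (which requires $n\ge 6$); everything else is a direct chaining of the cited results. In particular, no construction and no case analysis on the structure of $G$ is required. If one wanted to be slightly more self-contained, one could also remark that among cubic graphs the excluded $K_4$ is exactly the one that lies below the $n/2+2$ bound of Theorem~\ref{n/2} in a way that could conflict with the $n-1$ bound, and check that for $n\ge 6$ we indeed have $n/2+2\le n-1$; but this computation is already encapsulated in Corollary~\ref{OCDCcubic} and need not be repeated.
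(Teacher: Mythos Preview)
Your argument is correct and matches the paper's own approach exactly: the corollary is stated as following directly from Theorem~\ref{cubicX'3} and Corollary~\ref{OCDCcubic}, and your only addition is the easy observation that excluding $K_4$ forces $n\ge 6$, which is precisely what is needed to apply the latter.
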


\section{\large The minimal counterexample to the $\rm SOCDC$ conjecture}\label{sec:MCOSCDC}
If the $\rm CDC$ conjecture is false, then it must have a minimal counterexample.
In this section, we study the properties of the minimal counterexample to the $\rm SOCDC$ conjecture.
\begin{obs}\label{subdivide}
If $G$ is a graph with an $\rm SOCDC$ and $G'$ is the graph obtained from $G$
by dividing one edge of $G$, then $G'$ also admits an $\rm SOCDC$.
\end{obs}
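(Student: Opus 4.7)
The plan is to lift the given SOCDC of $G$ to an SOCDC of $G'$ by locally rerouting the two directed cycles that cover the subdivided edge, without creating any new cycles. Since $|V(G')|=|V(G)|+1$, we may actually afford one extra cycle, but in fact none will be needed.

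First I would set up notation: let $n=|V(G)|$, let $\mathcal{C}$ be an SOCDC of $G$ so that $|\mathcal{C}|\le n-1$, let $uv$ be the edge of $G$ being subdivided, and let $w$ be the new vertex, so that in $G'$ the edge $uv$ is replaced by $uw$ and $wv$. In $G_s$, the two opposite arcs $u\to v$ and $v\to u$ each lie in exactly one directed cycle of $\mathcal{C}$; call these $C_1$ and $C_2$, respectively. I would first argue $C_1\neq C_2$: because $G$ is simple the cycles of an OCDC have length at least three, so a single directed cycle with distinct vertices cannot contain both $u\to v$ and $v\to u$.

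Next I would construct the modified collection. Define $C_1'$ by replacing the single arc $u\to v$ in $C_1$ with the two arcs $u\to w$ and $w\to v$; similarly define $C_2'$ by replacing $v\to u$ in $C_2$ with $v\to w$ and $w\to u$. Both $C_1'$ and $C_2'$ are directed cycles of $G'_s$. Set $\mathcal{C}'=(\mathcal{C}\setminus\{C_1,C_2\})\cup\{C_1',C_2'\}$. A direct arc-by-arc check shows $\mathcal{C}'$ is an OCDC of $G'$: every arc of $G_s$ other than $u\to v$ and $v\to u$ is covered exactly once by $\mathcal{C}'$ since $\mathcal{C}$ covered it exactly once; and the four new arcs $u\to w$, $w\to v$, $v\to w$, $w\to u$ of $G'_s$ incident to $w$ are each covered exactly once, by $C_1'$ or $C_2'$ as appropriate.

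Finally, I would compare sizes: $|\mathcal{C}'|=|\mathcal{C}|\le n-1<n=|V(G')|-1$, so $\mathcal{C}'$ is an SOCDC of $G'$. There is no real obstacle here; the only subtlety worth stating clearly is that $C_1$ and $C_2$ are distinct, which is what makes the naive replacement unambiguous.
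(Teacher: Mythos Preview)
Your argument is correct and is precisely the natural one; the paper in fact states this result as an observation without supplying a proof, so there is nothing to compare against beyond noting that your local rerouting of the two directed cycles through the new vertex $w$ is the intended (and only reasonable) approach. One very minor remark: the strict inequality $|\mathcal{C}'|\le n-1<n=|V(G')|-1$ is more than you need, since $|\mathcal{C}'|\le |V(G')|-1$ already suffices for an SOCDC.
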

%
%
\begin{cor}
 Let $G$  be the minimal counterexample to the $\rm SOCDC$ conjecture, then the minimum degree of $G$ is at least $3$.
\end{cor}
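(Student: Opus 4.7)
The plan is to argue by contradiction. Since $G$ is $2$-connected we have $\delta(G)\ge 2$, so the only case to rule out is that some vertex $v$ has degree exactly $2$; let $u,w$ be its two neighbors. The strategy is to suppress $v$ to obtain a strictly smaller $2$-connected graph $G'$, invoke the minimality of $G$ to get an $\rm SOCDC$ of $G'$, and lift it to an $\rm SOCDC$ of $G$, contradicting the assumption that $G$ is a counterexample.

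First I would handle the case $uw\notin E(G)$. Let $G'=(G-v)+uw$ be the graph obtained by smoothing $v$; then $G'$ is a simple $2$-connected graph on $n-1$ vertices, and $G$ is precisely $G'$ with the edge $uw$ subdivided by $v$. Provided $G'\notin\{K_4,K_6\}$, minimality yields an $\rm SOCDC$ of $G'$, and Observation~\ref{subdivide} immediately promotes it to an $\rm SOCDC$ of $G$.

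Next I would treat the case $uw\in E(G)$. Set $G'=G-v$. A short argument using the $2$-connectivity of $G$ --- any path through $v$ can be rerouted along the edge $uw$ --- shows that $G'$ is still $2$-connected on $n-1$ vertices. Assuming $G'\notin\{K_4,K_6\}$, it has an $\rm SOCDC$ $\mathcal{C}$ with $|\mathcal{C}|\le n-2$. The arcs $uw$ and $wu$ lie in unique members $C^{+}$ and $C^{-}$ of $\mathcal{C}$. I would replace $C^{+}$ by the cycle $C^{+}_{v}$ obtained by substituting the arc $u\to w$ with the $2$-arc path $u\to v\to w$, and add the new directed triangle $[w,v,u]$. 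Every arc of $G_s$ is then covered exactly once, and the modified collection has size $|\mathcal{C}|+1\le n-1$ --- an $\rm SOCDC$ of $G$, again a contradiction.

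The main obstacle is the exceptional situations in which the reduced graph $G'$ equals $K_4$ or $K_6$, since minimality cannot be applied on graphs that themselves lack an $\rm SOCDC$. These degenerate cases leave only an explicit short list of candidates for $G$ (a subdivision of $K_4$ or $K_6$ in the first case, and $K_4$ or $K_6$ together with an extra degree-$2$ vertex joined to two adjacent vertices in the second), and for each one can exhibit an $\rm SOCDC$ by direct construction, closing the last gap and forcing $\delta(G)\ge 3$.
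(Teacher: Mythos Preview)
Your proposal is correct and its core idea---suppress a degree-$2$ vertex, apply minimality to the smaller graph, and lift back via Observation~\ref{subdivide}---is precisely the paper's intended argument: the corollary is stated immediately after that observation with no proof, so the authors regard it as an immediate consequence.

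Where you differ is in thoroughness. The paper's implicit one-line argument only covers the situation in which the two neighbours $u,w$ of the degree-$2$ vertex are non-adjacent, so that suppression yields a simple graph and Observation~\ref{subdivide} applies verbatim. You additionally treat the case $uw\in E(G)$ (where suppression would create a multi-edge) by deleting $v$ instead and splicing in a directed triangle, and you isolate the exceptional reductions to $K_4$ and $K_6$. Both refinements are sound; in particular, for the $K_4$/$K_6$ exceptions in your first case one can simply note that the explicit $\rm OCDC$s of $K_4$ and $K_6$ of sizes $4$ and $6$ given earlier in the paper become $\rm SOCDC$s once a vertex is added by subdivision. So your route is the paper's route, carried out with more care about edge cases the paper glosses over.
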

\begin{thm}\label{SOCDCK3}
 The minimal counterexample to the $\rm SOCDC$ conjecture is $3$-connected.
\end{thm}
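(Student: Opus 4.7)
My plan is to argue by contradiction. Suppose the minimal counterexample $G$ has a $2$-vertex cut $\{u,v\}$, and write $V(G)=V_1\cup V_2$ with $V_1\cap V_2=\{u,v\}$ and no edges joining $V_1\setminus\{u,v\}$ to $V_2\setminus\{u,v\}$. Let $G_i=G[V_i]$ and let $G_i'$ be $G_i$ with the edge $uv$ adjoined (only when it is absent from $G$). Using the $2$-connectivity of $G$ together with the preceding corollary --- which gives $\delta(G)\ge 3$ and hence $|V_i|\ge 4$ --- a short Menger-style check shows that each $G_i'$ is simple, $2$-connected, and strictly smaller than $G$.

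The heart of the proof is a gluing construction. Given any $\rm OCDC$ $\mathcal{C}_i$ of $G_i'$, the edge $uv$ lies in exactly two cycles: one $A_i^+\in\mathcal{C}_i$ carrying the arc $u\to v$ and one $A_i^-\in\mathcal{C}_i$ carrying $v\to u$. Deleting those arcs gives four directed paths connecting $u$ and $v$ through $V_i\setminus\{u,v\}$. If $uv\notin E(G)$, I concatenate $A_1^+{-}uv$ with $A_2^-{-}vu$ and $A_1^-{-}vu$ with $A_2^+{-}uv$; because the two constituent paths of each concatenation live on opposite sides of the cut, each of the resulting walks is a simple directed cycle of $G$, and together with the remaining cycles of $\mathcal{C}_1\cup\mathcal{C}_2$ they form an $\rm OCDC$ of $G$ of size $|\mathcal{C}_1|+|\mathcal{C}_2|-2$. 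If $uv\in E(G)$, the arcs $u\to v$ and $v\to u$ still need to be covered exactly once in $G$, so I glue only the pair $(A_1^+,A_2^-)$ into a single cycle and leave $A_1^-$ and $A_2^+$ untouched, producing an $\rm OCDC$ of $G$ of size $|\mathcal{C}_1|+|\mathcal{C}_2|-1$.

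If neither $G_1'$ nor $G_2'$ lies in $\{K_4,K_6\}$, minimality of $G$ yields $|\mathcal{C}_i|\le|V_i|-1$, so the gluing returns an $\rm OCDC$ of $G$ with at most $(|V_1|-1)+(|V_2|-1)-1=|V(G)|-1$ cycles, contradicting that $G$ has no $\rm SOCDC$. When exactly one of $G_1',G_2'$, say $G_1'$, equals $K_4$ or $K_6$ and $uv\notin E(G)$, plugging the $|V_1|$-cycle $\rm OCDC$ exhibited before Conjecture~\ref{SOCDCconj} into the gluing still yields an $\rm OCDC$ of $G$ of size at most $|V(G)|-1$, and the contradiction goes through.

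The main obstacle is the remaining set of cases --- either both $G_1',G_2'\in\{K_4,K_6\}$, or exactly one of them is $K_4,K_6$ while $uv\in E(G)$ --- in which the straightforward gluing is short by one or two cycles. In the latter I would look for a sharper gluing that reroutes an additional non-$uv$ cycle of the $K_4$- or $K_6$-$\rm OCDC$ through $\mathcal{C}_2$, saving the extra cycle. In the former $G$ is forced to have at most $10$ vertices, so one closes by exhibiting an $\rm SOCDC$ for each configuration directly; for example, when $G_1'\cong G_2'\cong K_4$ and $uv\notin E(G)$, with $V(G)=\{u,v,a,b,c,d\}$, the collection
\[
\{[u,a,b,v,c,d],\;[u,b,a,v,d,c],\;[v,a,u,d],\;[v,b,u,c]\}
\]
is an $\rm SOCDC$ of $G$ well inside the bound $|V(G)|-1=5$, and the remaining small cases succumb to analogous constructions, completing the contradiction.
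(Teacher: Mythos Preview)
Your approach is exactly the paper's: split along the $2$-cut, adjoin $uv$ to both sides, and splice the two (S)OCDCs along their $uv$-cycles, saving two cycles when $uv\notin E(G)$ and one when $uv\in E(G)$; the generic arithmetic matches verbatim. One wording slip: $uv$ must be adjoined to $G_i$ whenever it is absent from $G_i$, not from $G$ --- when $uv\in E(G)$ it lies in only one of $G_1,G_2$, yet your gluing requires it in both $G_i'$.

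The cases you flag as obstacles are precisely where the paper supplies the missing work. When exactly one $G_i'\in\{K_4,K_6\}$ and $uv\in E(G)$, it carries out your ``sharper gluing'' explicitly: the two $\mathcal{C}_2$-cycles through $uv$ are rerouted through fixed $u$--$v$ paths of the small side, and two (for $K_4$) or four (for $K_6$) further short cycles are listed by hand. When both $G_i'\in\{K_4,K_6\}$ and $uv\notin E(G)$, it writes out explicit SOCDCs for all three configurations (your $K_4$--$K_4$ example agrees with theirs up to relabelling). When both $G_i'\in\{K_4,K_6\}$ and $uv\in E(G)$, however, the paper does \emph{not} give a direct construction: it observes that $u$ is then adjacent to every other vertex, so $G=(G\setminus u)\vee K_1$, and invokes Theorem~\ref{n-1} together with an external OPPDC theorem from~\cite{BaOm:OPPDC}. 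Your proposed ad hoc construction would also close this sub-case, but be aware that the paper takes this shortcut instead.
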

\begin{proof}{

Let $G$, the minimal counterexample to the $\rm SOCDC$ conjecture be a $2$-connected graph of order $n$ with vertex cut $\{v_1,v_2\}$ and
 $G=G_1\cup G_2$, where $V(G_1)\cap V(G_2)=\{v_1,v_2\}$ and
$|V(G_i)|=n_i,\ i=1,2$.
 Assume that $G_i\cup\{v_1v_2\}$ has an $\rm SOCDC,\ \mathcal{C}_i,\ i=1,2$.
Let $C_i^j, j=1,2,$ be the two directed cycles in $\mathcal{C}_i$, $i=1,2,$ which
include the directed edge $v_jv_{j+1}$,
where subscripts are reduced modulo~$2$. In each of the following cases, we show that $G$ admits an $\rm SOCDC$, which is a contradiction.
\begin{description}
\item(I) If $v_1v_2\in E(G)$, then we define
$$\mathcal{C}=\mathcal{C}_1\cup \mathcal{C}_2\cup \{C_1^1\Delta C_2^2\}\setminus \{C_1^1, C_2^2\}.$$
The collection $\mathcal{C}$ is an $\rm OCDC$ of $G$, where
$$|\mathcal{C}|=|\mathcal{C}_1|+|\mathcal{C}_2|-1\le (n_1-1)+(n_2-1)-1$$
$$\hspace{18.2mm}\le (n_1+n_2)-3$$
$$\hspace{30mm}\le (n+2)-3=n-1.$$

If $G_1\cup\{v_1v_2\}=K_4$ with $V(K_4)=\{v_1,v_2,v_3,v_4\}$,  and $G_2\cup \{v_1v_2\}$ has an $\rm SOCDC$, say $\mathcal{C}_2$, then let $C_1=[v_1,v_2,v_4]$,  $C_2=[v_1,v_4,v_3,v_2]$,
$C_3=C_2^1\cup (v_1,v_3,v_4,v_2) \setminus \{v_1v_2\}$, and $C_4=C_2^2\cup (v_2,v_3,v_1) \setminus \{v_2v_1\}$.
Therefore, $$\mathcal{C}=\mathcal{C}_2\cup \{C_1,C_2,C_3,C_4\}\setminus \{C_1^1, C_2^2\}$$ is an $\rm SOCDC$ of $G$.

If $G_1\cup\{v_1v_2\}=K_6$ with $V(K_6)=\{v_1,v_2,v_3,v_4,v_5,v_6\}$,  and $G_2\cup \{v_1v_2\}$ has an $\rm SOCDC$, say $\mathcal{C}_2$, then let $C_1=[v_1,v_2,v_4,v_6,v_3,v_5]$,
 $C_2=[v_1,v_3,v_6,v_2]$, $C_3=[v_1,v_4,v_2,v_5,v_6]$, $C_4=[v_1,v_5,v_2,v_3,v_4]$,
$C_5=C_2^1\cup (v_1,v_6,v_5,v_4,v_3,v_2) \setminus \{v_1v_2\}$, and $C_6=C_2^2\cup (v_2,v_6,v_4,v_5,v_3,v_1) \setminus \{v_2v_1\}$.
Therefore, $$\mathcal{C}=\mathcal{C}_2\cup \{C_1,C_2,C_3,C_4,C_5,C_6\}\setminus \{C_1^1, C_2^2\}$$ is an $\rm SOCDC$ of $G$.

If $G_1\cup\{v_1v_2\}=G_2\cup\{v_1v_2\}=K_4$ or $G_1\cup\{v_1v_2\}=K_4$ and  $G_2\cup\{v_1v_2\}=K_6$ or $G_1\cup\{v_1v_2\}=G_2\cup\{v_1v_2\}=K_6$, then 
by Theorem~1 in~\cite{BaOm:OPPDC}, $G\setminus v_1$ admits an $\rm OPPDC$, thus by Theorem~\ref{n-1}, $G$ has an $\rm SOCDC$.

\item(II) If $v_1v_2\notin E(G)$, then we define
$$\mathcal{C}=\mathcal{C}_1\cup \mathcal{C}_2\cup \{C_1^1\Delta C_2^2,C_1^2\Delta C_2^1\}\setminus \{C_1^1, C_1^2,C_2^1, C_2^2\}.$$
The collection $\mathcal{C}$ is an $\rm OCDC$ of $G$, where $|\mathcal{C}|\le n-2$.\\
Furthermore, if $G_1\cup \{v_1v_2\}=K_4$ or $K_6$,  $v_1v_2\notin E(G)$, and
$G_2\cup \{v_1v_2\}$ has an $\rm SOCDC$, by the similar argument in above using the given $\rm SOCDC$ for $K_4$ and $K_6$
of size $4$ and $6$, an $\rm SOCDC$ for $G$ is obtained.

If $G_1\cup\{v_1v_2\}=G_2\cup\{v_1v_2\}=K_4$ with $V(G_1)=\{v_1,v_2,v_3,v_4\}$ and $V(G_2)=\{v_1,v_2,v_5,v_6\}$, then 

$\mathcal{C}=\{[v_1,v_4,v_3,v_2,v_5,v_6],[v_1,v_5,v_2,v_3],[v_1,v_3,v_4,v_2,v_6,v_5],[v_1,v_6,v_2,v_4]\}$ 

is an $\rm SOCDC$ of $G$.

If $G_1\cup\{v_1v_2\}=K_4$ with $V(G_1)=\{v_1,v_2,v_3,v_4\}$ and $G_2\cup\{v_1v_2\}=K_6$ $V(G_2)=\{v_1,v_2,v_5,v_6,v_7,v_8\}$, then

$\mathcal{C}=\{[v_1,v_6,v_5,v_7,v_8,v_2,v_3],[v_1,v_3,v_4,v_2,v_8],[v_1,v_7,v_6,v_8,v_5,v_2,v_4],[v_1,v_5,v_8,$

\hspace{1cm}$v_7,v_2,v_6],
[v_1,v_8,v_6,v_2,v_7,v_5],[v_1,v_4,v_3,v_2,v_5,v_6,v_7]\}$ 

is an $\rm SOCDC$ of $G$.

If $G_1\cup\{v_1v_2\}=G_2\cup\{v_1v_2\}=K_6$ with $V(G_1)=\{v_1,v_2,v_3,v_4,v_5,v_6\}$ and  $V(G_2)=\{v_1,v_2,v_7,v_8,v_9,v_{10}\}$, then

$\mathcal{C}=\{[v_1,v_6,v_4,v_5,v_3,v_2,v_7,v_9,v_8,v_{10}],[v_1,v_3,v_5,v_4,v_6,v_2,v_{10},v_8,v_9,v_7],[v_1,v_4,$

\hspace{1cm}$v_3,v_6,v_5,v_2,v_9,v_{10},v_7,v_8],[v_1,v_5,v_6,v_3,v_4,v_2,v_8,v_7,v_{10},v_9],[v_1,v_8,v_2,v_4],$

\hspace{1cm}$[v_1,v_{10},v_2,v_6],[v_1,v_9,v_2,v_5],[v_1,v_7,v_2,v_3]\}$

is an $\rm SOCDC$ of $G$.

\end{description}
}\end{proof}

\begin{cor}\label{SOCDCK'3}
The minimal counterexample to the $\rm SOCDC$ conjecture is
$3$-edge-connected.
\end{cor}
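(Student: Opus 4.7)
The plan is to derive this corollary essentially in one line from Theorem~\ref{SOCDCK3} via the classical Whitney inequality
\[
\kappa(G)\le\kappa'(G)\le\delta(G),
\]
which holds for every simple graph, where $\kappa$ and $\kappa'$ denote vertex- and edge-connectivity respectively. Since the minimal counterexample $G$ is $3$-connected by Theorem~\ref{SOCDCK3}, we immediately obtain $\kappa'(G)\ge\kappa(G)\ge 3$, so $G$ is $3$-edge-connected and there is nothing further to prove.

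If instead one prefers a self-contained splicing argument parallel to the proof of Theorem~\ref{SOCDCK3}, the outline would be as follows. Assume for contradiction that the minimal counterexample $G$ of order $n$ has an edge cut of size at most $2$; since $G$ is bridgeless (indeed $2$-connected) the cut has size exactly $2$, say $\{u_1v_1,u_2v_2\}$, separating $G$ into two sides $G_1$ containing $u_1,u_2$ and $G_2$ containing $v_1,v_2$. Form the auxiliary graphs $G_1'=G_1+u_1u_2$ and $G_2'=G_2+v_1v_2$, each of which is smaller than $G$ and thus admits an $\rm SOCDC$ by minimality. One would then splice these two covers along the auxiliary edges using the symmetric-difference trick used in part~(I) of the proof of Theorem~\ref{SOCDCK3}, producing an $\rm SOCDC$ of $G$ with at most $(n_1-1)+(n_2-1)-1\le n-1$ cycles.

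The main obstacle in this direct splicing approach is exactly the one encountered in Theorem~\ref{SOCDCK3}: one must separately handle the cases where $G_1'$ or $G_2'$ happens to be $K_4$ or $K_6$, supplying ad-hoc orientations analogous to those listed there. Because the Whitney inequality route entirely sidesteps this case analysis and reduces the statement to a single line, I would present the corollary with the one-line connectivity argument and leave the splicing construction implicit.
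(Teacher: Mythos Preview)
Your one-line argument via Whitney's inequality $\kappa(G)\le\kappa'(G)$ is correct and is exactly what the paper intends: the corollary is stated immediately after Theorem~\ref{SOCDCK3} with no proof at all, so the deduction from $3$-connectivity is the whole content. Your alternative splicing outline is unnecessary here, as you yourself note.
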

An edge cut  $F$, is called {\sf trivial} if one of the component in $G\setminus F$ be an isolated vertex.

\begin{thm}
The minimal counterexample to the $\rm SOCDC$ conjecture has no non-trivial edge cut of size  $3$.
\end{thm}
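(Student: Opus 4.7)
My plan is to argue by contradiction, mirroring the approach of Theorem~\ref{SOCDCK3}. Suppose the minimal counterexample $G$ of order $n$ admits a non-trivial $3$-edge-cut $F=\{e_1,e_2,e_3\}$ with $G\setminus F = G_1\sqcup G_2$, $|V(G_i)| = n_i \ge 2$, and $e_i = u_iv_i$ with $u_i\in V(G_1)$, $v_i\in V(G_2)$. I would form the two smaller graphs $H_1$ and $H_2$ by contracting $V(G_2)$ (resp.\ $V(G_1)$) in $G$ to a single new vertex $w_2$ (resp.\ $w_1$), so in $H_1$ the vertex $w_2$ is joined to $u_1,u_2,u_3$, and symmetrically in $H_2$.

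First I would verify that $H_1$ and $H_2$ qualify for the inductive step. Since $G$ is $3$-connected by Theorem~\ref{SOCDCK3} and $3$-edge-connected by Corollary~\ref{SOCDCK'3}, any $2$-edge-cut or cut vertex of $H_i$ would pull back to a $2$-edge-cut or cut vertex of $G$, so each $H_i$ is $2$-connected (and in fact $3$-edge-connected). Moreover $|V(H_i)| = n_i + 1 \le n-1$ because the cut is non-trivial, so by the minimality of $G$ each $H_i$ admits an SOCDC $\mathcal{C}_i$ of size at most $n_i$, provided $H_i\notin\{K_4,K_6\}$.

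Next I would glue the two covers. In $(H_1)_s$ the vertex $w_2$ has in- and out-degree exactly $3$, so precisely three directed cycles of $\mathcal{C}_1$ pass through $w_2$, and each such cycle uses one arc $u_i\to w_2$ and one arc $w_2\to u_j$; this data determines a permutation $\sigma_1$ of $\{1,2,3\}$, and $\mathcal{C}_2$ analogously yields a permutation $\sigma_2$. Deleting $w_2$ from the three cycles of $\mathcal{C}_1$ through $w_2$ leaves three directed paths in $G_1$ with endpoints in $\{u_1,u_2,u_3\}$, and similarly one obtains three directed paths in $G_2$. I would then splice these six paths together through the six cut arcs of $F$; with transitions chosen consistently at the boundary vertices, the resulting closed directed walks decompose into at most three simple directed cycles of $G_s$, one for each cycle of the permutation $\sigma_1^{-1}\sigma_2^{-1}$. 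Together with the $(|\mathcal{C}_1|-3)+(|\mathcal{C}_2|-3)$ cycles of $\mathcal{C}_1\cup\mathcal{C}_2$ that avoid $w_1,w_2$ (and so live entirely in $G_1$ or $G_2$), this yields an OCDC $\mathcal{C}$ of $G$ with
$$|\mathcal{C}| \le (n_1-3)+(n_2-3)+3 = n-3 < n-1,$$
contradicting the fact that $G$ has no SOCDC.

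The main obstacle — and the step I expect to account for most of the case analysis — is the exceptional situation in which $H_1$ or $H_2$ equals $K_4$ or $K_6$, because minimality does not supply an SOCDC of $H_i$ there. When $H_i=K_4$, the side $G_i$ is a triangle $K_3$, and when $H_i=K_6$ it is a $K_5$; the finitely many combinations $(K_4,\cdot)$, $(K_6,\cdot)$, $(K_4,K_4)$ (where $G$ is the triangular prism, which is covered by Proposition~\ref{planar} since it is a planar graph with $|E|<2|V|-2$), $(K_4,K_6)$, and $(K_6,K_6)$ must each be dispatched by an explicit SOCDC of $G$, in the same ad hoc manner as the tables appearing at the end of the proof of Theorem~\ref{SOCDCK3}. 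A minor secondary technicality is to ensure that the $u_i$ (and the $v_i$) are distinct, so that $H_i$ is simple; $3$-connectedness of $G$ rules out all three endpoints collapsing to one, and the remaining case where two coincide can be absorbed by a small modification of the contraction, or equivalently by subdividing a cut edge and invoking Observation~\ref{subdivide}.
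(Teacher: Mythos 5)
Your overall strategy (contract each side of the cut, apply minimality to obtain covers of the two smaller graphs, then splice the cycles through the contracted vertex along the six cut arcs) is the same as the paper's, but your key gluing step has a gap. The three cycles of $\mathcal{C}_1$ through the contracted vertex determine a fixed-point-free permutation of $\{1,2,3\}$, i.e.\ a $3$-cycle, and likewise for $\mathcal{C}_2$; relabelling the pairs $(u_i,v_i)$ conjugates both permutations simultaneously, so labels alone cannot control their relative orientation. In the bad case, where the relevant product is itself a $3$-cycle, your splicing produces a \emph{single} closed directed walk running through all six paths; since the three paths lying in $G_1$ (and likewise those in $G_2$) come from different cycles of an OCDC and may share vertices, this walk need not be a simple directed cycle, and a decomposition of it into directed cycles is not bounded by three (nor by the five the count could absorb). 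So the assertion that the walks ``decompose into at most three simple directed cycles, one for each cycle of $\sigma_1^{-1}\sigma_2^{-1}$'' is unjustified exactly where it is needed. The missing idea --- hidden in the paper's ``without loss of generality'' about which directed paths $(u_{j-1},w_1,u_{j+1})$ and $(v_{j+1},w_2,v_{j-1})$ the cycles contain --- is that reversing every directed cycle of one of the two covers again gives an OCDC of the same size and inverts its permutation; after this normalization the two permutations are inverse to each other, every spliced object consists of one $G_1$-path and one $G_2$-path joined by two cut arcs, and it is automatically a simple directed cycle because the two sides are vertex-disjoint.

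Two secondary problems. First, the exceptional cases are not as you describe: the contracted vertex has degree at most $3$, so $H_i=K_6$ cannot occur at all, and when $H_i=K_4$ (one side a triangle) the other side is an arbitrary graph, so these cases cannot be ``dispatched by an explicit SOCDC of $G$''; the paper instead uses that every OCDC of $K_4$ has exactly $4=|V(K_4)|$ cycles and checks that the slack in the count still yields $|\mathcal{C}|\le n-1$. Second, the situation in which cut edges share endpoints occupies two of the paper's three cases (II and III), and your proposed fix fails: Observation~\ref{subdivide} passes from an SOCDC of $G$ to one of its subdivision, not conversely, and the subdivided graph is larger than $G$, so minimality gives nothing there. (In fact one can exclude shared endpoints using $3$-connectivity together with $\delta(G)\ge3$, but that argument has to be made; as written, this case is unhandled.)
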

\begin{proof}{
 Let $G$ be the minimal counterexample to the $\rm SOCDC$ conjecture.
We know that $G$ is $2$-connected and $3$-edge-connected.
 Assume that $G$ has a non-trivial edge cut of size  $3$. We consider the following cases.
\item{\rm (I)} {  $G=G_1\cup G_2\cup\{u_1v_1,u_2v_2,u_3v_3\},$
where $G_1\cap G_2=\emptyset$, the vertices $u_i$ are distinct vertices of $G_1$, and the vertices $v_i$
are distinct vertices of $G_2$, $i=1,2,3$}.

Denote by $H_i$ the graph obtained by contracting the
subgraph $G_{i+1}$ to a single vertex $w_i$, $i=1,2$, where subscripts are reduced modulo $2$.
Since $\deg(w_i)=3$, $H_i\neq K_6$, $i=1,2$. By the minimality of $G$, $H_i$ has an $\rm SOCDC$ or
$H_i=K_4$.
Therefore, $H_i$ has an $\rm OCDC$, $\mathcal{C}_i$, $i=1,2$. Let $C_i^j,\ j=1,2,3,$ be the three directed cycles
in $\mathcal{C}_i$ which include $w_i,\ i=1,2$, where without loss of generality, we assume that
$C_1^j$ includes directed path $(u_{j-1},w_1,u_{j+1})$, and
$C_2^j$ includes directed path $(v_{j+1},w_2,v_{j-1})$, where subscripts are reduced modulo $3$, $j=1,2,3$.
Let $P_i^j=C_i^j\setminus w_i,\ i=1,2,\ j=1,2,3$. Define $C^j=P_1^j\cup P_2^j\cup \{u_{j-1}v_{j-1},v_{j+1}u_{j+1}\},$
$\mathcal{C}^{'}=\{C^j : j=1,2,3\},$
and
$\mathcal{C}^{''}=\{C_i^j : i=1,2,\ j=1,2,3\}.$
Thus,
$\mathcal{C}=\mathcal{C}_1\cup\mathcal{C}_2\cup\mathcal{C}^{'}\setminus \mathcal{C}^{''}$
is an $\rm OCDC$ of $G$, where
$|\mathcal{C}|=|\mathcal{C}_1|+|\mathcal{C}_2|-3.$
Note that every $\rm OCDC$ of $K_4$ has $4$ cycles, therefore, in both cases $|\mathcal{C}|\le |V(G)|-1$, which is a contradiction.
\item{\rm (II)} {  $G=G_1\cup G_2\cup\{u_1v_1,u_1v_2,u_2v_3\},$
where $G_1\cap G_2=\emptyset$, the vertices $u_1$ and $u_2$ are distinct vertices of $G_1$, and the vertices $v_i$
are distinct vertices of $G_2$, $i=1,2,3$}.

Denote by $H_i$ the graph obtained by contracting the
subgraph $G_{i+1}$ to a single vertex $w_i$, $i=1,2$, and removing the multiple edge in $H_1$, where subscripts are reduced modulo $2$.
Since $\deg(w_i)=2$ or $3$,  $H_1\neq K_4$ and $H_i\neq K_6$, $i=1,2$. By the minimality of $G$, $H_i$ has an $\rm SOCDC$ or
$H_2=K_4$.
Therefore, $H_i$ has an $\rm OCDC$, $\mathcal{C}_i$, $i=1,2$. Let $C_1^1$ and $C_1^2$ be two directed cycles in $\mathcal{C}_1$
which include $w_1$, where without loss of generality, we assume that
$C_1^j$ includes directed path $(u_{j},w_1,u_{j+1})$, where subscripts are reduced modulo $2$, $j=1,2$, and
 $C_2^k,\ k=1,2,3,$ be the three directed cycles
in $\mathcal{C}_2$ which include $w_2$, where without loss of generality, we assume that
$C_2^k$ includes directed path $(v_{k},w_2,v_{k-1})$, where subscripts are reduced modulo $3$, $k=1,2,3$.
Let $P_1^j=C_1^j\setminus w_1,\ j=1,2,$ and $P_2^k=C_2^k\setminus w_2,\ k=1,2,3$.
Define $C^1=P_1^1\cup P_2^3\cup \{u_{1}v_{2},v_{3}u_{2}\},$
 $C^2=P_1^2\cup P_2^1\cup \{u_{2}v_{3},v_{1}u_{1}\},$ and $C^3=P_2^2\cup \{u_{1}v_{1},v_{2}u_{1}\}$. Let
$\mathcal{C}^{'}=\{C^1,C^2,C^3\},$
and
$\mathcal{C}^{''}=\{C_1^1,C_1^2,C_2^1,C_2^2,C_2^3\}.$
Thus,
$\mathcal{C}=\mathcal{C}_1\cup\mathcal{C}_2\cup\mathcal{C}^{'}\setminus \mathcal{C}^{''}$
is an $\rm OCDC$ of $G$, where
$|\mathcal{C}|=|\mathcal{C}_1|+|\mathcal{C}_2|-2.$
Note that every $\rm OCDC$ of $K_4$ has $4$ cycles, therefore, in both cases $|\mathcal{C}|\le |V(G)|-1$, which is a contradiction.

\item{\rm (III)} {  $G=G_1\cup G_2\cup\{u_1v_1,u_1v_2,u_2v_2\},$
where $G_1\cap G_2=\emptyset$, the vertices $u_1$ and $u_2$ are distinct vertices of $G_1$, and the vertices $v_1$ and $v_2$
are distinct vertices of $G_2$}.

Denote by $H_i$ the graph obtained by contracting the
subgraph $G_{i+1}$ to a single vertex $w_i$, $i=1,2$, and removing the multiple edges, where subscripts are reduced modulo $2$.
Since $\deg(w_i)=2$,  $H_i\neq K_4$ or $ K_6$, $i=1,2$. By the minimality of $G$, $H_i$ has an $\rm SOCDC$, $\mathcal{C}_i$, $i=1,2$.

Let $C_i^j,\ j=1,2,$ be the two directed cycles
in $\mathcal{C}_i$ which include $w_i,\ i=1,2$, where without loss of generality, we assume that
$C_1^j$ includes directed path $(u_{j},w_1,u_{j+1})$, and
$C_2^j$ includes directed path $(v_{j},w_2,v_{j+1})$, where subscripts are reduced modulo $2$, $j=1,2$.
Let $P_i^j=C_i^j\setminus w_i,\ i=1,2,\ j=1,2$. Define $C^1=P_1^1\cup P_2^2\cup \{u_{1}v_{1},v_{2}u_{2}\},$
 $C^2=P_1^2\cup \{u_{2}v_{2},v_{2}u_{1}\},$ and $C^3=P_2^1\cup \{u_{1}v_{2},v_{1}u_{1}\}$. Let
$\mathcal{C}^{'}=\{C^1,C^2,C^3\},$
and
$\mathcal{C}^{''}=\{C_1^1,C_1^2,C_2^1,C_2^2\}.$
Thus,
$\mathcal{C}=\mathcal{C}_1\cup\mathcal{C}_2\cup\mathcal{C}^{'}\setminus \mathcal{C}^{''}$
is an $\rm OCDC$ of $G$, where
$|\mathcal{C}|=|\mathcal{C}_1|+|\mathcal{C}_2|-1.$
 Therefore,  $|\mathcal{C}|\le |V(G)|-1$, which is a contradiction.
}\end{proof}

The properties of the minimal counterexample to the $\rm OPPDC$ conjecture is studied in~\cite{BaOm:OPPDC,MR2046642}, and it is shown that this counterexample is a $2$-connected and $3$-edge-connected graph with minimum degree at least $4$. Regarding to the relation between existence of an $\rm OPPDC$ for a graph and an $\rm SOCDC$ (Theorem~\ref{n-1}), the following relation between the order and the number of edges  of these two minimal counterexamples can be obtained.

Assume that $G_p$ and $G_c$ are the minimal counterexamples to the $\rm OPPDC$ conjecture
and the $\rm SOCDC$ conjecture, respectively. Let $G_p'$ be a graph obtained from $G_p$
by joining a new vertex to all vertex of $G_p$. By Theorem~\ref{n-1}, $G_p'$ has no
$\rm SOCDC$. Note that $G_p$ is connected, so $G_p'$ is $2$-connected. Thus,
$G_p'$ is a counterexample to the $\rm SOCDC$ conjecture. Therefore by the minimality of $G_c$,
$$|V(G_c)|+|E(G_c)|\le|V(G_p')|+|E(G_p')|=2|V(G_p)|+|E(G_p)|+1.$$
Since $\delta(G_c)\ge 3$ ,
$$|V(G_c)|\le \frac{2}{5}( 2|V(G_p)|+|E(G_p)|+1).$$
\section{\large $\rm SOCDC$ and   the Cartesian product}\label{sec:CP}
In~\cite{productI} infinite classes of graphs with an $\rm SCDC$ are obtained using the Cartesian product of graphs.
In this section, we proved the similar results in the oriented version.

The {\sf Cartesian product} of two graphs $G$ and $H$, denoted by $G\square H$,
is the graph with vertex set $V(G)\times V(H)$ and two vertices $(u,v)$
and $(x,y)$ are  adjacent if and only if either $u=x$  and  $vy\in E(H)$
  or  $ux\in E(G)$  and $v=y$.
\begin{thm}\label{GP}
If $G$ has an $\rm OPPDC$, then $G\square P_2$ has
an $\rm SOCDC$ with $|V(G)|$ directed cycles. Furthermore,
if $G$ has an $\rm SOCDC$, then $G\square P_n$, $n\ge3$, has
an $\rm SOCDC$ with at most $|V(G\square P_n)|-1$ directed cycles.
\end{thm}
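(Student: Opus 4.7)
Since any $\rm OPPDC$ of $G$ has exactly $v=|V(G)|$ paths (a simple counting argument: each vertex supplies two of the $2|\mathcal{P}|$ endpoint-incidences, so $|\mathcal{P}|=v$), I will write $\mathcal{P}=\{P_1,\ldots,P_v\}$ with $P_j=(v_{j,1},\ldots,v_{j,k_j})$ and lift each $P_j$ to the directed cycle
$$
D_j=[(v_{j,1},1),(v_{j,2},1),\ldots,(v_{j,k_j},1),(v_{j,k_j},2),(v_{j,k_j-1},2),\ldots,(v_{j,1},2)]
$$
of $G\square P_2$, which traces $P_j$ along level~$1$, drops to level~$2$ at its endpoint, retraces $P_j$ backwards along level~$2$, and closes via the rung arc at its start. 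I will then verify that $\{D_1,\ldots,D_v\}$ is an $\rm SOCDC$: a horizontal arc at level~$1$ (resp.\ level~$2$) corresponds to the unique arc of $G_s$ traced forward (resp.\ in reverse) by some $P_j$, while the rung arcs $(v,1)\to(v,2)$ and $(v,2)\to(v,1)$ are picked up by the unique paths of $\mathcal{P}$ ending, respectively starting, at $v$. This yields exactly $v$ cycles, as required.

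\textbf{Plan for the second statement.}
Let $\mathcal{C}_G=\{C_1,\ldots,C_m\}$ be an $\rm SOCDC$ of $G$ with $m\le v-1$. I plan to argue by induction on $n$, with $n=3$ as the base case. For the base case I will place one lifted copy of each $C_k$ at level~$1$ (in its original orientation) and at level~$n=3$ (in the reversed orientation), thereby catching ``half'' of the horizontal arcs at the two end-levels, and between consecutive levels I will insert the ``staircase'' cycles
$$
S_k^i=[(u_{k,1},i),\ldots,(u_{k,\ell_k},i),(u_{k,\ell_k},i+1),(u_{k,\ell_k-1},i+1),\ldots,(u_{k,1},i+1)]
$$
for each $C_k=[u_{k,1},\ldots,u_{k,\ell_k}]\in\mathcal{C}_G$ and each $i\in\{1,2\}$, possibly cyclically reindexing the presentation of each $C_k$ to control the ``turning columns'' $u_{k,1}$ and $u_{k,\ell_k}$. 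The staircases then pick up the remaining horizontal arcs at the middle level together with the rung arcs at the turning columns, and small ``patch'' cycles (e.g. 4-cycles of the form $[(u,i),(v,i),(v,i+1),(u,i+1)]$ on individual edges $uv\in E(G)$) will handle the rung arcs at columns left untouched. For the inductive step from $n-1$ to $n$, I will extend the SOCDC of $G\square P_{n-1}$ by inserting an analogous staircase layer between levels $n-1$ and $n$ together with a lifted copy of the ``missing'' orientation of $\mathcal{C}_G$ at the new level $n$, adding at most $v$ cycles per step so that the total stays within $nv-1$.

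The main obstacle will be the combinatorial bookkeeping of staircase turning columns: because $m\le v-1<v$, the $m$ staircases per level-pair cannot by themselves be ``anchored'' at every column of $V(G)$, so a careful augmentation is needed to ensure every rung arc is covered \emph{exactly} once rather than zero or twice, and at the same time the augmentation must respect the budget $nv-1$. The flexibility afforded by cyclically shifting each $C_k$ (which reassigns its turning columns), together with the slack $v-1-m\ge 0$ and the option of shortcutting two 4-cycles sharing a rung into a single 6-cycle when $G$ is dense, should close the gap and deliver the required $\rm SOCDC$.
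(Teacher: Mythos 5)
Your argument for the first assertion is complete and correct, and it is the standard lifting that the paper itself (which only refers to the proof of Theorem 1 of \cite{productI}) has in mind: an $\rm OPPDC$ has exactly $|V(G)|$ paths, each lifted cycle covers the level-one arcs its path traces, their reverses at level two, and one rung arc in each direction at the path's terminus and origin, and the endpoint conditions of an $\rm OPPDC$ give every rung arc multiplicity exactly one. (Only a cosmetic point: a trivial one-vertex path would lift to a digon rather than a cycle; such paths can always be removed from an $\rm OPPDC$ by splitting a path passing through that vertex, so this is harmless but worth saying.)

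For the second assertion, however, what you offer is a plan with genuine gaps, not a proof. First, the base case as described is internally inconsistent: an $\rm SOCDC$ of $G$ covers every arc of $G_s$ exactly once, so placing $\mathcal{C}_G$ at level $1$ in its original orientation while also running the staircases $S_k^1$ forward along level $1$ covers the non-closing arcs of each $C_k$ at that level twice, while the reverse arcs and the closing arcs come up short; choosing complementary orientations repairs part of this, but each staircase still omits the closing edge of its cycle at both of its levels, and the $4$-cycle patches you then insert consume horizontal arcs whose multiplicities you never track. Second, the count -- which is the whole content of ``small'' -- is not established: with $m\le |V(G)|-1$ cycles the staircases anchor at most $m$ rung columns per level pair, and a rough tally of level copies, staircases and patches exceeds $n|V(G)|-1$, so ``reindexing plus slack'' does not visibly close the budget. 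Third, the inductive step cannot work as stated: in any $\rm OCDC$ of $G\square P_{n-1}$ every arc of the top level is already covered exactly once, so there is no ``missing orientation'' of $\mathcal{C}_G$ at level $n-1$ to hook new staircases onto, and any new cycle through a rung between levels $n-1$ and $n$ must reuse an already covered arc. An induction can only succeed with a strengthened hypothesis that keeps the last level ``open'' (an $\rm OPPDC$-like, path-shaped structure on the top copy -- which is precisely why the first half of the theorem is phrased in terms of $\rm OPPDC$); as written, the second half of the statement remains unproved.
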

\begin{proof}{  The proof is similar to the proof of Theorem 1 in~\cite{productI}. 
}\end{proof}

Now we need a theorem about the existence of $\rm OPPDC$ for the Cartesian product of graphs.
\begin{lem}\label{GsquareH}~{\rm \cite{BaOm:OPPDC}}
If  $G$ and $H$ have an $\rm OPPDC$, then $G\square H$ also has an
$\rm OPPDC$.
\end{lem}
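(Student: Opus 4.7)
The plan is to construct an OPPDC of $G\square H$ directly by lifting the given OPPDCs of $G$ and $H$ into rows and columns of the product and gluing matched pieces at a common junction vertex.

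First I would fix notation. Let $\mathcal{P}_G=\{P_u : u\in V(G)\}$ be the OPPDC of $G$, indexed so that $P_u$ is the unique path starting at $u$, and let $\tau_G(u)$ denote its endpoint; since each vertex is both a start and an end exactly once, $\tau_G$ is a bijection on $V(G)$. Set up $\mathcal{P}_H=\{Q_v : v\in V(H)\}$ and $\tau_H$ analogously. For each $u\in V(G)$ the column $\{u\}\times V(H)$ induces a copy of $H$ on which every path $Q_v$ lifts to a directed path $\widetilde{Q}_{u,v}$ from $(u,v)$ to $(u,\tau_H(v))$; likewise for each $w\in V(H)$ every $P_u$ lifts to a directed path $\widetilde{P}_{u,w}$ in the row $V(G)\times\{w\}$ from $(u,w)$ to $(\tau_G(u),w)$.

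Next, for each $(u,v)\in V(G)\times V(H)$ I would define
\[
R_{u,v} \;=\; \widetilde{Q}_{u,v}\ \text{followed by}\ \widetilde{P}_{u,\,\tau_H(v)},
\]
which is a directed walk from $(u,v)$ to $(\tau_G(u),\tau_H(v))$. The vertices used by the first piece all have first coordinate $u$, and those used by the second piece all have second coordinate $\tau_H(v)$, so the two pieces intersect only at the junction $(u,\tau_H(v))$; since each piece is itself a lift of a simple path, $R_{u,v}$ is a simple directed path. Degenerate cases in which $P_u$ or $Q_v$ has length $0$ simply collapse the corresponding piece to a single vertex and cause no trouble.

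Then I would verify that $\mathcal{R}=\{R_{u,v} : (u,v)\in V(G)\times V(H)\}$ is an OPPDC of $G\square H$ by checking the three required properties. For the vertex condition: each $(u,v)$ is the start of the unique path $R_{u,v}$ and the end of the unique path $R_{\tau_G^{-1}(u),\,\tau_H^{-1}(v)}$, using that $\tau_G$ and $\tau_H$ are bijections. For the arc condition: every arc of $(G\square H)_s$ is either of H-type (both endpoints sharing first coordinate $u$) or of G-type (both endpoints sharing second coordinate $w$); the H-arcs at column $u$ are precisely the arcs appearing in the pieces $\widetilde{Q}_{u,v}$ as $v$ ranges over $V(H)$, and these cover every such arc exactly once because $\mathcal{P}_H$ is an OPPDC of $H$; similarly the G-arcs at row $w$ appear exactly in the pieces $\widetilde{P}_{u,w}$, each sitting inside $R_{u,\tau_H^{-1}(w)}$, and are covered exactly once because $\mathcal{P}_G$ is an OPPDC of $G$.

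The construction is essentially forced by the counting, and no real obstacle arises; the only slightly delicate step is checking that the two lifted pieces meet in exactly one vertex so that $R_{u,v}$ is a genuine simple path, but this is immediate from the product structure since one piece lies in a column and the other in a row.
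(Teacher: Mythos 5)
Your construction is correct. The paper itself does not prove this lemma (it is quoted from the reference \cite{BaOm:OPPDC}), but your row--column concatenation $R_{u,v}=\widetilde{Q}_{u,v}$ followed by $\widetilde{P}_{u,\tau_H(v)}$ is exactly the standard construction used for this result, and each of your verifications --- simplicity of $R_{u,v}$ from the column/row intersection being the single junction vertex, the start/end conditions via the bijections $\tau_G$ and $\tau_H$, and the exactly-once coverage of the $H$-type arcs in each column and the $G$-type arcs in each row --- goes through as stated.
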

The following  corollary follows directly from Theorems~\ref{GP} and~\ref{GsquareH}.
\begin{cor}
If $G$ has an $\rm OPPDC$, then for all $l\ge 2$, $G^l\square P_{2}$
has an $\rm SOCDC$, where $G^l=\overbrace{G\square \cdots\square G}^{l\ times}$.
\end{cor}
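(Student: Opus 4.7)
The plan is to deduce the corollary from the two cited results via a short induction on $l$. The overall strategy splits cleanly into two steps: first upgrade the $\rm OPPDC$ on $G$ to an $\rm OPPDC$ on the iterated Cartesian power $G^l$, and then apply the $P_2$-part of Theorem~\ref{GP} to that power.

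For the first step, I would argue by induction on $l \ge 1$ that $G^l$ admits an $\rm OPPDC$. The base case $l=1$ is precisely the hypothesis of the corollary. For the inductive step, I would write $G^l = G \square G^{l-1}$ using associativity of $\square$, so that both factors possess an $\rm OPPDC$ — the left one by hypothesis, the right one by the inductive assumption. Lemma~\ref{GsquareH} then immediately furnishes an $\rm OPPDC$ of $G^l$. This covers in particular the range $l \ge 2$ appearing in the statement.

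For the second step, I would simply invoke the first clause of Theorem~\ref{GP}, which converts any graph with an $\rm OPPDC$ into a graph whose Cartesian product with $P_2$ admits an $\rm SOCDC$. Applied to $G^l$, this yields an $\rm SOCDC$ of $G^l \square P_2$ and completes the argument.

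I do not anticipate any genuine obstacle: the proof is essentially a mechanical plug-in of the two preceding results. The only point worth a moment's attention is confirming associativity of the Cartesian product so that the factorisation $G^l = G \square G^{l-1}$ used in the inductive step is unambiguous — a standard structural fact. Everything else, including the size bound $|V(G^l \square P_2)| - 1$ on the number of directed cycles, is already built into the statement of Theorem~\ref{GP} that we invoke.
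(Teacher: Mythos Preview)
Your proposal is correct and matches the paper's intended argument: the paper simply states that the corollary follows directly from Theorems~\ref{GP} and~\ref{GsquareH}, and you have spelled out precisely this, using Lemma~\ref{GsquareH} inductively to propagate the $\rm OPPDC$ to $G^l$ and then invoking the first clause of Theorem~\ref{GP}.
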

\begin{cor}
Every hypercube graph $Q_n,\ n\ge 2,$ has an $\rm SOCDC$.
\end{cor}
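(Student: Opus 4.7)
The plan is to realize $Q_n$ as an iterated Cartesian power of $P_2$ and then to invoke the preceding corollary, which converts an $\rm OPPDC$ of a factor into an $\rm SOCDC$ of the Cartesian product with $P_2$.

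The single prerequisite is that $P_2$ itself has an $\rm OPPDC$. With $V(P_2)=\{u,v\}$, the two length-one directed paths $(u,v)$ and $(v,u)$ cover each of the two arcs of $(P_2)_s$ exactly once, and each of $u,v$ serves once as a beginning and once as an end of a path. Thus $P_2$ has an $\rm OPPDC$, which is the only fact needed to feed into the machinery above.

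For $n\ge 3$, I would write $Q_n = P_2^{\,n} = P_2^{\,n-1}\square P_2$, take $G=P_2$ and $l=n-1\ge 2$ in the preceding corollary, and immediately conclude that $Q_n$ admits an $\rm SOCDC$. (Behind the scenes, Theorem~\ref{GsquareH} is what supplies the $\rm OPPDC$ of $P_2^{\,n-1}$ by iteration, and Theorem~\ref{GP} then produces the $\rm SOCDC$ after the final product with $P_2$.)

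The case $n=2$ falls outside the hypothesis $l\ge 2$ of the preceding corollary and must be handled separately. Since $Q_2 = P_2\square P_2$ and $P_2$ has an $\rm OPPDC$, Theorem~\ref{GP} applies directly and gives an $\rm SOCDC$ of $Q_2$ with $|V(P_2)|=2$ directed cycles; equivalently, $Q_2=C_4$ admits the trivial $\rm SOCDC$ consisting of its two oppositely oriented Hamilton cycles. There is no real obstacle in this argument: the substance of the result is already packaged in the preceding corollary, and the only thing to watch is the bookkeeping of this small base case.
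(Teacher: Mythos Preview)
Your argument is correct and matches the paper's intended reasoning: the corollary is stated without proof precisely because it follows at once from the preceding corollary (with $G=P_2$) together with the trivial $\rm OPPDC$ of $P_2$, and you have supplied exactly that derivation, including the easy base case $Q_2=C_4$.
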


With the similar argument as above,  the following theorems, which are the oriented version of  some results in~\cite{productI} for  $\rm SCDC$, can be proved.
%
\begin{thm}\label{GT}
If $G$ has an $\rm OPPDC$ and  an $\rm SOCDC$, then for any tree, $T$, $G\square T$ has an $\rm SOCDC$.
\end{thm}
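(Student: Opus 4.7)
The plan is to induct on $|V(T)|$, following the template of Theorem~1 of~\cite{productI} and using Theorem~\ref{GP} as the key tool at each step. The base cases $|V(T)|=1$ (where $G\square T\cong G$, which has an $\rm SOCDC$ by hypothesis) and $|V(T)|=2$ (where $T\cong P_2$, so Theorem~\ref{GP} produces an $\rm SOCDC$ of $G\square P_2$ with $|V(G)|\le|V(G\square P_2)|-1$ cycles from the $\rm OPPDC$ of $G$) are immediate.

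For the inductive step I would pick a leaf $v$ of $T$ with neighbor $u$, set $T'=T-v$, and view $G\square T=(G\square T')\cup(G\square\{uv\})$, with the two pieces overlapping exactly in the copy $G_u$. The inductive hypothesis yields an $\rm SOCDC$ $\mathcal{C}'$ of $G\square T'$ with at most $|V(G)|(|V(T)|-1)-1$ cycles, while Theorem~\ref{GP} delivers an $\rm OCDC$ $\mathcal{D}$ of $G\square\{uv\}$ with $|V(G)|$ cycles, one cycle $D_i$ per $\rm OPPDC$ path $P_i\colon s_i\to t_i$ of $G$: traverse $P_i$ forward in $G_u$, cross to $G_v$ by the matching arc at $t_i$, return along the reverse of $P_i$ in $G_v$, and cross back by the matching arc at $s_i$.

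The central step is to merge $\mathcal{C}'$ and $\mathcal{D}$ into an $\rm SOCDC$ of $G\square T$. Both families cover every arc of $G_u$ exactly once, so the disjoint union correctly covers every arc of $G\square T$ except that arcs of $G_u$ are doubled. I would remove this duplication by splicing each $D_i$ against the cycles of $\mathcal{C}'$ that traverse the corresponding path $P_i$ in $G_u$: the $G_u$-arcs along $P_i$ are then used exactly once (by the composite cycle), while the $G_v$-arcs and matching arcs of $D_i$ are absorbed into the modified $\mathcal{C}'$-cycles. A routine bookkeeping argument, of the same flavour as the one in~\cite{productI}, shows that the outcome is still a collection of oriented directed cycles covering each arc of $(G\square T)_s$ once and that the cycle count stays within $|V(G\square T)|-1$.

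The main obstacle is that the $G_u$-segments appearing in the cycles of $\mathcal{C}'$ need not coincide with the $\rm OPPDC$ paths $P_i$, so the splicing first has to perform an Eulerian realignment within $G_u$ to match the arc-partition induced by $\mathcal{D}$ with the one induced by $\mathcal{C}'$. This is precisely where \emph{both} hypotheses on $G$ enter at once: the $\rm OPPDC$ furnishes the paths $P_i$ that guide the realignment, while the $\rm SOCDC$ of $G$ anchors the cycle-count bound at the bottom of the induction and keeps it propagating correctly through the leaf-addition steps.
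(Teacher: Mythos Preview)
Your high-level plan --- induct on $|V(T)|$, peel off a leaf $v$ with neighbour $u$, combine an $\rm SOCDC$ $\mathcal{C}'$ of $G\square T'$ with the $|V(G)|$ OPPDC-cycles $\mathcal{D}$ on $G\square\{uv\}$ --- is exactly the route the paper indicates (it defers to the construction in~\cite{productI}). Your arithmetic is also correct: $|\mathcal{C}'|+|\mathcal{D}|\le|V(G)||V(T)|-1$, so no net reduction in cycle count is required at the merge.

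The genuine gap is the merge itself, and you have not closed it. You say you will ``splice each $D_i$ against the cycles of $\mathcal{C}'$ that traverse the corresponding path $P_i$ in $G_u$'', but as you yourself note, the $G_u$-segments of $\mathcal{C}'$ need not be the paths $P_i$; they can be arbitrary directed walks through $G_u$, possibly entire cycles contained in $G_u$. ``Eulerian realignment'' and ``routine bookkeeping'' are not a construction. Concretely: deleting one copy of every $(G_u)_s$-arc from the multiset $\mathcal{C}'\cup\mathcal{D}$ leaves the right arc-set, but you must reassemble it into \emph{simple} directed cycles (length $\ge 3$) using at most $|\mathcal{C}'|+|\mathcal{D}|$ of them, and a generic Eulerian decomposition gives no such bound. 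What makes the argument go through in~\cite{productI} (and in the oriented version here) is that one does \emph{not} take $\mathcal{C}'$ as a black box: the induction carries along the specific structure of the cover on the copy $G_u$, so that its $G_u$-segments already coincide with the (O)PPDC paths and the splice is a one-for-one substitution of $Q_i$ for $P_i$. Without maintaining that structure through the induction, the step you label ``routine'' is essentially the whole theorem.
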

%
%
%
\begin{thm}\label{GC}
If $G$ has an $\rm OPPDC$, then for all $k\ge 2$, $G\square C_{2k}$ has an $\rm SOCDC$.
Furthermore, if $G$ has an $\rm SOCDC$, then $G\square C_{2k-1}$ has an $\rm SOCDC$.
\end{thm}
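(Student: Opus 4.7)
The approach is to adapt to the oriented setting the construction used in~\cite{productI} for the non-oriented $\rm SCDC$ of $G\square C_n$, precisely in the spirit of the preceding Theorems~\ref{GP} and~\ref{GT}.

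For the even-cycle statement $G\square C_{2k}$, starting from an $\rm OPPDC$ $\{P_1,\ldots,P_n\}$ of $G$, I would build one ``helix'' directed cycle for each path. For $P_j=(v_1,\ldots,v_m)$, the helix traverses $P_j$ forward in the odd layers $1,3,\ldots,2k-1$ and backward in the even layers $2,4,\ldots,2k$, switching layers through the vertical arc at whichever endpoint of $P_j$ is the current terminus (from $v_m$ when leaving an odd layer, from $v_1$ when leaving an even one), and closes via the vertical arc $(v_1,2k)\to(v_1,1)$. Since every arc of $G_s$ is in exactly one path of the $\rm OPPDC$, the $n$ helices jointly cover every horizontal arc of $G\square C_{2k}$ exactly once; and since each vertex of $G$ is the start of one path and the end of another, the helices also cover exactly one cyclic direction of each vertical fiber. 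To cover the reverse direction, I would add the single vertical cycle $[(v,1),(v,2k),(v,2k-1),\ldots,(v,2)]$ for each $v\in V(G)$. The total is $2n$ directed cycles, and $2n\le 2kn-1$ whenever $k\ge 2$.

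For the odd-cycle statement $G\square C_{2k-1}$, starting from an $\rm SOCDC$ of $G$, the above zig-zag fails because an odd number of layers cannot alternate forward and backward and still close through a single vertical arc. Instead I would apply Theorem~\ref{GP} to the spanning subgraph $G\square P_{2k-1}$ to obtain an $\rm SOCDC$ of $G\square P_{2k-1}$ with at most $(2k-1)n-1$ cycles, and then absorb the remaining $n$ vertical edges that close $P_{2k-1}$ into $C_{2k-1}$ by rerouting a small number of the existing cycles through those new arcs via symmetric-difference operations, keeping the cycle count inside the budget.

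The main obstacle is the odd case. The naive variant---lifting the $\rm SOCDC$ of $G$ to each of the $2k-1$ copies and appending $2n$ vertical cycles---overshoots the bound $(2k-1)n-1$ by roughly $2n-2k+2$ cycles. Saving this many requires merging layerwise horizontal cycles with vertical cycles without producing figure-eight configurations (which are not themselves directed cycles), and then checking that after the rerouting every arc of $G\square C_{2k-1}$ is still covered exactly once.
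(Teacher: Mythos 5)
Your construction for the even case is correct and complete: the $n$ helices cover every horizontal arc of $(G\square C_{2k})_s$ exactly once (the forward arcs of the $\rm OPPDC$ partition the arcs of $G_s$, and so do the reversed arcs), and since each vertex of $G$ is once a beginning and once an end of a path, the helices pick up precisely the $2k$ ``upward'' vertical arcs at each fiber, so adding the $n$ reversed fiber cycles gives an $\rm OCDC$ with $2n\le 2kn-1$ cycles for $k\ge 2$. This is in the same spirit as the argument the paper intends (the paper gives no details, deferring to the constructions of~\cite{productI} as in Theorem~\ref{GP}), so for the first assertion of the theorem your proposal is fine.

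The second assertion, however, is not proved. For $G\square C_{2k-1}$ you only describe a strategy: start from the $\rm SOCDC$ of $G\square P_{2k-1}$ given by Theorem~\ref{GP} and ``absorb'' the $2n$ closing arcs ``by rerouting a small number of the existing cycles through those new arcs via symmetric-difference operations.'' This is exactly the hard part, and it is left unspecified: the $\rm SOCDC$ of $G\square P_{2k-1}$ may already use the full budget of $(2k-1)n-1$ cycles, so the new arcs must be inserted without increasing the count and without disturbing the exact single coverage of every old arc; a symmetric difference of a directed cycle with anything removes coverage of old arcs, and splicing two existing cycles through the closing arcs at a vertex $v$ need not yield a directed cycle when the two cycles share vertices. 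Your own accounting shows the naive layering overshoots by about $2n-2k+2$ cycles, i.e.\ on the order of $n$ merges are required, not ``a small number,'' and you give no rule for choosing the cycles to merge, no verification that the merged objects are directed cycles rather than figure-eights or disconnected even subgraphs, and no check of exact double coverage. Until that construction is exhibited and verified (for instance by designing the cycles globally, as you did in the even case, so that the vertical arcs are built into the cycles from the start), the odd-cycle half of the theorem remains unproven in your write-up.
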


The following corollary  concludes  directly from Theorems~\ref{GsquareH} and~\ref{GC}.
\begin{cor}
If $G$ has an $\rm OPPDC$, then for all $k,l\ge 2$, $G^l\square C_{2k}$ has
an $\rm SOCDC$, where $G^l=\overbrace{G\square \cdots\square G}^{l\ times}$.
\end{cor}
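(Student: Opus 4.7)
The plan is to combine Theorem~\ref{GsquareH} (closure of OPPDC under Cartesian product) iteratively, and then invoke Theorem~\ref{GC} once at the end to get the SOCDC. Concretely, I would first establish by induction on $l\ge 1$ that $G^l$ has an OPPDC, and then feed $G^l$ as the ``base graph'' into Theorem~\ref{GC}.

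For the induction, the base case $l=1$ is exactly the hypothesis that $G$ has an OPPDC. For the inductive step, suppose $G^{l-1}$ has an OPPDC. Since $G$ also has an OPPDC by hypothesis, Theorem~\ref{GsquareH} applied to the pair $(G^{l-1},G)$ yields an OPPDC of
\[
G^{l-1}\square G \;=\; G^l,
\]
completing the induction.

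Now fix any $k\ge 2$ and any $l\ge 2$. By the previous step $G^l$ has an OPPDC, so the first half of Theorem~\ref{GC} (applied with $G$ replaced by $G^l$) gives that $G^l\square C_{2k}$ has an SOCDC. This is exactly what we need.

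There is no real obstacle here; the whole content of the corollary is the observation that OPPDC is preserved under iterated Cartesian products, which is already supplied by Theorem~\ref{GsquareH}, and that an OPPDC of the resulting base graph is enough to trigger Theorem~\ref{GC} for an even cycle factor. The only thing to be careful about is to keep the two hypotheses of Theorem~\ref{GsquareH} in force throughout the induction (both factors need an OPPDC), which is automatic because one factor is always $G$ itself and the other is the previous power $G^{l-1}$.
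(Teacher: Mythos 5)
Your proposal is correct and follows exactly the route the paper intends: the paper states that the corollary ``concludes directly from Theorems~\ref{GsquareH} and~\ref{GC},'' i.e., iterate Theorem~\ref{GsquareH} to get an OPPDC of $G^l$ and then apply the first half of Theorem~\ref{GC} to $G^l\square C_{2k}$. Your explicit induction on $l$ merely spells out what the paper leaves implicit.
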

\begin{thm}
 If $G$ has an $\rm SOCDC$,
then for  $n\ge 2|V(G)|+1$, $G\square C_n$ has an $\rm SOCDC$.
\end{thm}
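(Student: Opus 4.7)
The plan is to give a direct, almost calculation-free construction that decomposes $G \square C_n$ into its ``layer'' copies of $G$ and its ``fiber'' copies of $C_n$, and then verify that the count fits in $|V(G\square C_n)|-1$ exactly when $n \ge 2|V(G)|+1$.

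Write $m = |V(G)|$. View $G \square C_n$ as $n$ layers $G_0, \ldots, G_{n-1}$ (each isomorphic to $G$) glued together by $m$ vertex-disjoint fibers, one per vertex of $G$, each a copy of $C_n$. Every edge of $G \square C_n$ lies either in exactly one layer or in exactly one fiber, so an $\rm OCDC$ of $G \square C_n$ can be assembled by combining (i) an $\rm OCDC$ of each layer and (ii) an $\rm OCDC$ of each fiber, with no interaction between the two kinds.

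For (i), I would take an $\rm SOCDC$ $\mathcal{C}_G$ of $G$, guaranteed by hypothesis, with $|\mathcal{C}_G|\le m-1$ directed cycles, and place a separate copy of $\mathcal{C}_G$ in each of the $n$ layers. This contributes at most $n(m-1)$ directed cycles. For (ii), each fiber is a $C_n$, whose symmetric orientation is itself the union of exactly two directed cycles (the two cyclic orderings), so each fiber is covered by $2$ directed cycles, contributing $2m$ in total. The union of all these directed cycles is an $\rm OCDC$ of $G \square C_n$ with at most
\begin{equation*}
n(m-1) + 2m \;=\; mn - n + 2m
\end{equation*}
directed cycles.

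The last step is simply to observe that $mn - n + 2m \le mn - 1$ iff $n \ge 2m+1$, which is exactly the hypothesis. Hence the constructed $\rm OCDC$ has at most $|V(G\square C_n)|-1$ directed cycles, i.e.\ it is an $\rm SOCDC$. There is no real obstacle here: the only subtlety is that the bound $n \ge 2|V(G)|+1$ is what makes the naive ``layers plus fibers'' decomposition economical enough, so one should point out that the threshold in the statement is precisely the break-even point of this counting.
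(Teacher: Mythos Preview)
Your proposal is correct and follows essentially the same approach as the paper: take the union of the $\rm SOCDC$'s of the $n$ copies of $G$ and the two-cycle $\rm OCDC$'s of the $|V(G)|$ copies of $C_n$, then verify the count $n(|V(G)|-1)+2|V(G)|\le n|V(G)|-1$ holds precisely when $n\ge 2|V(G)|+1$. Your write-up is in fact more explicit than the paper's, which compresses the whole argument into a single line.
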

\begin{proof}{
Let $\mathcal{F}$ be the union of the corresponding $\rm SOCDC$'s of copies of $G$  and the corresponding $\rm SOCDC$'s of copies of $C_n$. Since every $\rm OCDC$ of $C_n$ has two directed cycles,
and $n\ge 2|V(G)|+1$,
we have $|\mathcal{F}|\le n(|V(G)|-1)+2|V(G)|\le n(|V(G)|-1)+(n-1)\le n|V(G)|-1=|V(G\square C_n)|-1.$
Therefore, $\mathcal{F}$ is an  $\rm SOCDC$ of $G\square C_n$.
}\end{proof}

\section*{Acknowledgments}
The authors thank Professor C.Q. Zhang
for reading the manuscript and for his  helpful suggestions.

\bibliographystyle{plain}

\end{document}